\newcommand \listoftodos{\section*{Todo list} \@starttoc{tdo}}
\newcommand\l@todo[2]
\noindent \textit{#2}, \parbox{10cm}{#1}\par} \makeatother
\newcommand{\abs}[1]{\left\lvert#1\right\rvert}
\newcommand{\norm}[1]{\left\lVert#1\right\rVert}
\newcommand{\wh}[1]{\widehat{#1}}
\newcommand{\wt}[1]{\widetilde{#1}}
\newcommand{\ol}[1]{\overline{#1}}
\newcommand{\ul}[1]{\underline{#1}}
\renewcommand*{\Im}[1]{\operatorname{\mathfrak{Im}}{\left( #1 \right)}}
\renewcommand*{\Re}[1]{\operatorname{\mathfrak{Re}}{\left( #1 \right)}}
\def \bbR     {{\mathbb R}}
\def \cF     {{\mathcal F}}
\def \cN     {{\mathcal N}}
\def \cR     {{\mathcal R}}
\def \cS     {{\mathcal S}}
\def \cW     {{\mathcal W}}
\def \tf     {{\tilde{f}}}
\def \tT     {{\wt{T}}}
\def \tW     {{\wt{W}}}
\newcommand{\set}[1]{\left\{#1\right\}}
\newcommand{\beq}{\begin{equation}}
\newcommand{\eeq}{\end{equation}}
\newcommand{\bal}{\begin{align}}
\newcommand{\eal}{\end{align}}
  \theoremstyle{plain}
  \newtheorem{thm}{Theorem}[section]
  \newtheorem{lem}[thm]{Lemma}
  \newtheorem{prop}[thm]{Proposition}
  \newtheorem{cor}[thm]{Corollary}
  \theoremstyle{definition}
  \newtheorem{defn}{Definition}[section]
  \newtheorem{exmp}{Example}[section]
  \theoremstyle{remark}
  \newtheorem*{rem}{Remark}
  \newtheorem*{note}{Note}
  \newtheorem{case}{Case}
\newtheorem \undefined
    \newtheorem{thm}[theorem]{Theorem}
    \newtheorem{cor}[theorem]{Corollary}
    \newtheorem{defn}[definition]{Definition}
\begin{document}


\title{The Synchrosqueezing algorithm for time-varying spectral analysis: robustness properties and new paleoclimate applications}


\author{
  Gaurav~Thakur,~%
  Eugene~Brevdo,~%
  Neven~S.~Fu\v{c}kar,~%
  and~Hau-Tieng~Wu~%
\thanks{G. Thakur is with the MITRE Corporation, McLean, VA, e-mail: gthakur@alumni.princeton.edu.}%
\thanks{E. Brevdo is with the Climate Corporation, San Francisco, CA, e-mail:  ebrevdo@math.princeton.edu.}%
\thanks{N.S. Fu\v{c}kar is with the International Pacific Research Center, University of Hawaii, Honolulu, HI, e-mail: nevensf@gmail.com.}%
\thanks{H.-T. Wu is with the Program in Applied and Computational Mathematics, Princeton University, Princeton, NJ, e-mail: hauwu@math.princeton.edu.}%
}

\maketitle


\begin{abstract}
We analyze the stability properties of the Synchrosqueezing transform, a time-frequency signal analysis method that can identify and extract
oscillatory components with time-varying frequency and amplitude. We show that Synchrosqueezing is robust to bounded perturbations
of the signal and to Gaussian white noise. These results justify its applicability to noisy or nonuniformly sampled data that is ubiquitous
in engineering and the natural sciences. We also describe a practical implementation of Synchrosqueezing and provide guidance on tuning its main
parameters. As a case study in the geosciences, we examine characteristics of a key paleoclimate change in the last 2.5 million years, where
Synchrosqueezing provides significantly improved insights.
\end{abstract}


\section[section]{Introduction}

Synchrosqueezing is a time-frequency signal analysis algorithm designed to decompose signals into constituent
components with time-varying oscillatory characteristics. Such signals $f(t)$ have the general form
\begin{equation}\label{eq:sig}
  f(t) \,=\, \sum_{k=1}^K f_k(t) + e(t),
\end{equation}
where each component $f_k(t) = A_k(t)\cos(2\pi \phi_k(t))$ is a Fourier-like oscillatory mode, possibly with time-varying amplitude and
frequency, and $e(t)$ represents noise or measurement error.  The goal is to recover the amplitude $A_k(t)$ at the
instantaneous frequency (IF) $\phi'_k(t)$ for each $k$. \\

Signals of the form \eqref{eq:sig} arise naturally in numerous scientific and engineering applications, where it is often important
to understand their time-varying spectral properties. Many time-frequency (TF) transforms exist to analyze such signals, such as
the short-time Fourier transform (STFT), continuous wavelet transform (CWT), and the Wigner-Ville distribution (WVD)
\cite{Flandrin1999, Allen1977, Claasen1980, Rioul1992}. Synchrosqueezing is related to the class of time-frequency reassignment
(TFR) algorithms, used in the estimation of IFs from the modulus of a TF representation. TFR methods originate from a study of the
STFT, which "smears" the energy of the superimposed IFs around their center frequencies in the spectrogram. TFR methods apply a
post-processing ``reassignment'' map that focuses the spectrogram's energy towards the IF curves and results in a sharpened TF plot.
However, standard TFR methods do not allow for reconstruction (synthesis) of the components $f_k(t)$. \cite{Flandrin1999, Fulop2006, auger_flandrin:1995} \\

Originally introduced in the context of audio signal analysis \cite{Daubechies1996}, Synchrosqueezing was recently studied further in
\cite{Daubechies2010} and shown to be an alternative to the {\em Empirical Mode Decomposition} (EMD) method \cite{Huang1998}
with a more firm theoretical foundation. EMD has been found to be a useful tool for analyzing and decomposing natural signals and,
like EMD, Synchrosqueezing can extract and delineate components with time-varying spectrum. Furthermore, like EMD, and unlike classical
TFR techniques, it allows for the reconstruction of these components. Synchrosqueezing can be adapted to work "on top of" many of the
classical TF transforms. In this paper, we focus on the original, CWT-based approach studied in \cite{Daubechies1996} and
\cite{Daubechies2010}, although an STFT-based alternative was developed in \cite{Thakur2010} and other variants are also possible. \\

The purpose of this paper is threefold. First, in Section \ref{sec:main}, we study the stability properties of Synchrosqueezing.
We build on the theory presented in \cite{Daubechies2010} and prove that Synchrosqueezing is stable under bounded, deterministic
perturbations in the signal as well as under corruption by Gaussian white noise. This justifies the use of the algorithm in real-world
cases where different sources of error are present, such as thermal noise incurred from signal acquisition or quantization and
interpolation errors in processing the data.\\

Second, in Section \ref{sec:analysis}, we explain how Synchrosqueezing is implemented in practice and reformulate the approach from
\cite{Daubechies2010} into a discretized form that is more numerically viable and accessible to a wider audience. We also provide practical
guidelines for choosing several parameters that arise in this process. A MATLAB implementation of the algorithm has been developed
and is freely available as part of the Synchrosqueezing Toolbox \cite{SSToolbox}. In Section \ref{sec:wmisc}, we illustrate the
algorithm on several numerical test cases. We study its performance and compare it to some of the well known TF and TFR techniques. \\

Finally, in Section \ref{sec:SSpaleo}, we visit a key question in the Earth's climate of the last 2.5 million years (\,Myr).
We analyze a calculated solar flux index and paleoclimate records of the oxygen isotope ratio $\delta^{18}O$, an index of climate state,
over this period. We demonstrate that Synchrosqueezing clearly delineates the orbital cycles of the solar radiation and provides a greatly
improved representation of the projection of orbital signals in $\delta^{18}O$ records. In comparison to previous spectral analyses of
$\delta^{18}O$ time series, the Synchrosqueezing representation provides more robust and precise estimates in the time-frequency plane,
and contributes to our understanding of the link between solar forcing and climate response on very long time scales (on the order of $10$\,kyr\,-\,$1$\,Myr).


\section{\label{sec:main}The Stability of Synchrosqueezing}

\noindent In this section, we state and prove our main theorems on
the stability properties of Synchrosqueezing. We first review the
existing results on wavelet-based Synchrosqueezing and some associated
notation and terminology from the paper \cite{Daubechies2010}. We
define a class of functions (signals) on which the theory is established.

\noindent \begin{defn}{[}Sums of Intrinsic Mode Type (IMT) Functions{]}
The space $\mathcal{A}_{\epsilon,d}$ of \textit{superpositions of
IMT functions}, with smoothness $\epsilon>0$ and separation $d>0$,
consists of functions having the form $f(t)=\sum_{k=1}^{K}f_{k}(t)$
with $f_{k}(t)=A_{k}(t)e^{2\pi i\phi_{k}(t)}$, where for each $k$,
the $A_{k}$ and $\phi_{k}$ satisfy the following conditions.
\begin{align*}
 & A_{k}\in L^{\infty}\cap C^{1},\quad\phi_{k}\in C^{2},\quad\phi'_{k},\phi''_{k}\in L^{\infty},\quad\inf_{t}\phi'_{k}(t)>0,\\
 & \forall t\quad\left|A_{k}'(t)\right|\leq\epsilon\left|\phi_{k}'(t)\right|,\quad\left|\phi_{k}''(t)\right|\leq\epsilon\left|\phi_{k}'(t)\right|,\quad\mathrm{and}
\end{align*}

\noindent 
\begin{align*}
\frac{\phi'_{k}(t)-\phi'_{k-1}(t)}{\phi'_{k}(t)+\phi'_{k-1}(t)} & \geq d.
\end{align*}
\end{defn}

\noindent Functions in the class $\mathcal{A}_{\epsilon,d}$ are composed
of several Fourier-like oscillatory components with slowly time-varying
amplitudes and sufficiently smooth frequencies. The IF components
$\phi'_{k}$ are strongly separated in the sense that high frequency
components are spaced exponentially further apart than low frequency
ones.\\

We normalize the Fourier transform by $\widehat{h}(\xi)=\int_{-\infty}^{\infty}h(x)e^{-2\pi i\xi x}dx$
and use the notation $\tilde{\epsilon}=\epsilon^{1/3}$. Now for a
given mother wavelet $\psi$, the \textit{continuous wavelet transform}
(CWT) of $f$ at scale $a$ and time shift $b$ is given by $W_{f}(a,b)=a^{-1/2}\int_{-\infty}^{\infty}f(t)\overline{\psi(\frac{t-b}{a})}dt$.
If $\hat{f}$ is supported in $(0,\infty)$, then the inversion of
the CWT can be expressed as $f(b)=\frac{1}{\mathcal{R}_{\psi}}\int_{0}^{\infty}a^{-3/2}W_{f}(a,b)da,$
where we let $\mathcal{R}_{\psi}=\int_{0}^{\infty}\xi^{-1}\overline{\widehat{\psi}(\xi)}d\xi$
\cite[p. 6]{Daubechies2010}. We use the CWT to define the \textit{phase
transform $\omega_{f}(a,b)$} by
\begin{equation}
\omega_{f}(a,b)=\frac{\partial_{t}W_{f}(a,b)}{2\pi iW_{f}(a,b)}.\label{Omega}
\end{equation}
$\omega_{f}(a,b)$ can be thought of as an ``FM demodulated'' frequency
estimate that cancels out the influence of the wavelet $\psi$ on
$W_{f}(a,b)$ and results in a modified time-scale representation
of $f$. We can use this to consider the following operator.

\noindent \begin{defn}{[}CWT Synchrosqueezing{]} Let $f\in\mathcal{A}_{\epsilon,d}$
and $h\in C_{0}^{\infty}$ be a smooth function such that $\norm{h}_{L^{1}}=1$.
The \textit{Wavelet Synchrosqueezing transform }with accuracy $\delta$
and thresholds $\tilde{\epsilon}$ and $M$ is defined by
\begin{equation}
S_{f,\tilde{\epsilon}}^{\delta,M}(b,\eta)=\int_{\Gamma_{f,\tilde{\epsilon}}^{M}}\frac{W_{f}(a,b)}{a^{3/2}}\frac{1}{\delta}h\left(\frac{\eta-\omega_{f}(a,b)}{\delta}\right)da,\label{SS}
\end{equation}
where $\Gamma_{f,\tilde{\epsilon}}^{M}=\set{(a,b):a\in[M^{-1},M],|W_{f}(a,b)|>\tilde{\epsilon}}$.
We also denote $S_{f,\tilde{\epsilon}}^{\delta}(b,\eta):=S_{f,\tilde{\epsilon}}^{\delta,\infty}(b,\eta)$,
with the condition $a\in[M^{-1},M]$ replaced by $a>0$.\end{defn}

\noindent For sufficiently small $\delta$, this operator can be thought
of as a partial inversion of the CWT of $f$ (over the scale $a$),
but only taken over small bands around level curves in the time-scale
plane (where $\omega_{f}(a,b)\approx\eta$) and ignoring the rest
of the plane. As we let $\delta\to0$, the domain of the inversion
becomes concentrated on the level curves $\{(a,b):\omega_{f}(a,b)=\eta\}$.
The idea is that this localization process will allow us to recover
the components $f_{k}$ more accurately than inverting the CWT over
the entire time-scale plane. The following theorem was the main result
of \cite{Daubechies2010}.

\begin{thm} (Daubechies, Lu, Wu) \label{SSThm} Let $f=\sum_{k=1}^{K}A_{k}e^{2\pi i\phi_{k}}\in\mathcal{A}_{\epsilon,d}$
and $\tilde{\epsilon}=\epsilon^{1/3}$. Pick a function $h\in C_{0}^{\infty}$
with $\left\Vert h\right\Vert _{L^{1}}=1$, and pick a wavelet $\psi\in C^{1}$
such that its Fourier transform $\widehat{\psi}$ is supported in
$[1-\Delta,1+\Delta]$ for some $\Delta<\frac{d}{1+d}$. Then the
following statements hold for each $k$:
\begin{enumerate}
\item Define the ``scale band'' $Z_{k}=\{(a,b):|a\phi_{k}'(b)-1|<\Delta\}$.
For each point $(a,b)\in Z_{k}$ with $|W_{f}(a,b)|>\tilde{\epsilon}$,
we have
\[
|\omega_{f}(a,b)-\phi_{k}'(b)|\leq\tilde{\epsilon},
\]
and if $(a,b)\not\in Z_{k}$ for any $k$, then $|W_{f}(a,b)|\leq\tilde{\epsilon}$.
\item There is a constant $C_{1}$ such that for all $b\in\mathbb{R}$,
\[
\left|\lim_{\delta\rightarrow0}\left(\frac{1}{\mathcal{R}_{\psi}}\int_{\{\eta:|\eta-\phi'_{k}(b)|\leq\tilde{\epsilon}\}}S_{f,\tilde{\epsilon}}^{\delta}(b,\eta)d\eta\right)-A_{k}(b)e^{2\pi i\phi_{k}(b)}\right|\leq C_{1}\tilde{\epsilon}.
\]

\end{enumerate}
\end{thm}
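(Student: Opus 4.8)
The plan is to reduce both statements to a comparison, for each $k$, between $W_{f_k}(a,b)$ and the CWT of the frozen pure harmonic $t\mapsto A_k(b)e^{2\pi i(\phi_k(b)+\phi_k'(b)(t-b))}$, namely $A_k(b)e^{2\pi i\phi_k(b)}a^{1/2}\,\overline{\wh\psi(a\phi_k'(b))}$, and then to use the compact support of $\wh\psi$ together with the separation condition to decouple the $K$ components. The core analytic estimate is
\beq\label{eq:Wcompare}
\abs{\,W_{f_k}(a,b)-A_k(b)e^{2\pi i\phi_k(b)}a^{1/2}\,\overline{\wh\psi(a\phi_k'(b))}\,}\,\le\,\epsilon\,a^{1/2}\,\Gamma_1\!\left(a\phi_k'(b)\right),
\eeq
together with its analogue bounding $\partial_b W_{f_k}(a,b)$ against $2\pi i\phi_k'(b)$ times the same main term, with error $\epsilon\,a^{1/2}\,\Gamma_2(a\phi_k'(b))$; here $\Gamma_1,\Gamma_2$ are globally bounded functions depending only on $\psi$, $\Delta$, and $\norm{\phi_k'}_{L^\infty}$. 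I would prove \eqref{eq:Wcompare} by inserting the Taylor expansions $\phi_k(t)=\phi_k(b)+\phi_k'(b)(t-b)+\tfrac12\phi_k''(\zeta)(t-b)^2$ and $A_k(t)=A_k(b)+A_k'(\zeta')(t-b)$ into the integral defining $W_{f_k}$, estimating the remainders by $\abs{A_k'}\le\epsilon\abs{\phi_k'}$ and $\abs{\phi_k''}\le\epsilon\abs{\phi_k'}$, and using the factor $\overline{\psi((t-b)/a)}$ to confine $\abs{t-b}$ to order $a$, so that on the scales of interest, where $a\sim 1/\phi_k'(b)$, each remainder contributes $O(\epsilon)$ (the weighted norms $\int\abs{x}^j\abs{\psi(x)}\,dx$ enter here). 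Producing these bounds in a form clean enough to survive division by $a^{3/2}$ and integration in $a$ — in particular with the dependence on $a\phi_k'(b)$ made explicit — is, I expect, the main obstacle.

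The second ingredient is the geometry of the scale bands $Z_k$. Since $\Delta<\frac{d}{1+d}$, the $Z_k$ are pairwise disjoint: if $(a,b)\in Z_k\cap Z_j$ then $a\phi_k'(b),a\phi_j'(b)\in(1-\Delta,1+\Delta)$, so $\frac{\abs{\phi_k'(b)-\phi_j'(b)}}{\phi_k'(b)+\phi_j'(b)}<\frac{\Delta}{1-\Delta}<d$, contradicting the separation hypothesis. Hence, for $(a,b)\in Z_k$ and $j\ne k$ one has $(a,b)\notin Z_j$, so $\wh\psi(a\phi_j'(b))=0$, the main terms of $W_{f_j}$ and $\partial_b W_{f_j}$ vanish, and \eqref{eq:Wcompare} with its $\partial_b$-analogue give $\abs{W_{f_j}(a,b)}=O(\epsilon)$ and $\abs{\partial_b W_{f_j}(a,b)}=O(\epsilon)$ (the scales $a$ being bounded on $Z_k$ because $\phi_k'\ge\inf_t\phi_k'(t)>0$). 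If $(a,b)$ lies in no $Z_k$, then every main term vanishes, so $\abs{W_f(a,b)}\le\sum_k\abs{W_{f_k}(a,b)}=O(\epsilon)\le\tilde\epsilon$ for $\epsilon$ small, which is the second assertion of part (1). For the first assertion, on $Z_k$ write $W_f=W_{f_k}+\sum_{j\ne k}W_{f_j}$ and similarly for $\partial_b W_f$; combining \eqref{eq:Wcompare}, its $\partial_b$-analogue, and the vanishing of the off-band contributions yields $\abs{\partial_b W_f(a,b)-2\pi i\phi_k'(b)W_f(a,b)}=O(\epsilon)$, and dividing by $2\pi\abs{W_f(a,b)}>2\pi\tilde\epsilon$ gives $\abs{\omega_f(a,b)-\phi_k'(b)}=O(\epsilon/\tilde\epsilon)=O(\tilde\epsilon^{2})\le\tilde\epsilon$ for $\epsilon$ small. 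This is exactly where $\tilde\epsilon=\epsilon^{1/3}$ is chosen: it balances the numerator error $O(\epsilon)$ against the denominator floor $\tilde\epsilon$.

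For part (2), I would interchange the $\eta$- and $a$-integrals in \eqref{SS} — valid since, for fixed $b$, $\set{a:\abs{W_f(a,b)}>\tilde\epsilon}$ is bounded, all main terms being supported in $\bigcup_k Z_k$ — and then let $\delta\to0$. Since $\eta\mapsto\tfrac1\delta h\!\left(\tfrac{\eta-\omega_f(a,b)}{\delta}\right)d\eta$ converges weakly to the Dirac mass at $\eta=\omega_f(a,b)$, dominated convergence gives
\beq\label{eq:limit}
\lim_{\delta\to0}\frac{1}{\cR_\psi}\int_{\abs{\eta-\phi_k'(b)}\le\tilde\epsilon}S_{f,\tilde\epsilon}^{\delta}(b,\eta)\,d\eta\;=\;\frac{1}{\cR_\psi}\int_{E_k(b)}\frac{W_f(a,b)}{a^{3/2}}\,da,
\eeq
where $E_k(b)=\set{a:\abs{W_f(a,b)}>\tilde\epsilon,\ \abs{\omega_f(a,b)-\phi_k'(b)}<\tilde\epsilon}$. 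By part (1) and the band separation — using $\inf_t\phi_k'(t)>0$ to keep the relevant scales bounded and taking $\epsilon$ small enough that $2\tilde\epsilon$ is below the gaps between the $\phi_j'(b)$ — $E_k(b)$ agrees up to a null set with $\set{a:(a,b)\in Z_k,\ \abs{W_f(a,b)}>\tilde\epsilon}$; on that set $W_f=W_{f_k}+O(\epsilon)$, and \eqref{eq:Wcompare} lets one replace $W_{f_k}$ by its main term at cost $O(\epsilon)$, reducing the right-hand side of \eqref{eq:limit} to $\frac{A_k(b)e^{2\pi i\phi_k(b)}}{\cR_\psi}\int\frac{\overline{\wh\psi(a\phi_k'(b))}}{a}\,da$ (integral over the same set) plus $O(\epsilon)$. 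Enlarging the domain to $\set{a:(a,b)\in Z_k}$ costs only $O(\tilde\epsilon)$ — on the discarded scales $\abs{W_f}\le\tilde\epsilon$ forces $\abs{\wh\psi(a\phi_k'(b))}=O(\tilde\epsilon)$ via \eqref{eq:Wcompare} — and the substitution $\xi=a\phi_k'(b)$ turns the enlarged integral into $\int_0^\infty\xi^{-1}\overline{\wh\psi(\xi)}\,d\xi=\cR_\psi$, which cancels the $1/\cR_\psi$. Collecting the errors and using $\epsilon\le\tilde\epsilon$ for $\tilde\epsilon\le1$ yields the bound $C_1\tilde\epsilon$, with $C_1$ depending only on $\psi$, $d$, $K$, and on $\sup_k\norm{A_k}_{L^\infty}$ and $\sup_k\norm{\phi_k'}_{L^\infty}$.
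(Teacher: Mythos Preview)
The paper does not prove this statement: Theorem~\ref{SSThm} is stated with attribution ``(Daubechies, Lu, Wu)'' and explicitly identified as ``the main result of \cite{Daubechies2010}'', with no proof given here. The only fragments of the argument that appear in the present paper are those reused inside the proof of Theorem~\ref{SSStableThm}, namely the identity \eqref{SSPart1} and the observation that $a\in\mathrm{D}(b,f,\tilde\epsilon,\tilde\epsilon,\infty)$ forces $(a,b)\in Z_k$, both cited from \cite[p.~12]{Daubechies2010}.

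That said, your sketch is a faithful outline of the proof in \cite{Daubechies2010}: the core estimate \eqref{eq:Wcompare} comparing $W_{f_k}$ with the CWT of the frozen harmonic, the disjointness of the $Z_k$ from $\Delta<\tfrac{d}{1+d}$, the division by $\abs{W_f}>\tilde\epsilon$ that explains the choice $\tilde\epsilon=\epsilon^{1/3}$, and the $\delta\to0$ limit leading to the set $E_k(b)$ are all exactly as in the original. One small point: in your last step for part (2), it is cleaner to enlarge the domain of integration \emph{before} replacing $W_f$ by its main term, since on $Z_k\setminus E_k(b)$ you directly have $\abs{W_f}\le\tilde\epsilon$ and hence $\int_{Z_k\setminus E_k(b)}a^{-3/2}\abs{W_f}\,da=O(\tilde\epsilon)$; your route via ``$\abs{\wh\psi(a\phi_k'(b))}=O(\tilde\epsilon)$'' tacitly needs a lower bound on $\abs{A_k(b)}$ that is not assumed.
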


This result shows how Synchrosqueezing can identify and extract the
components $\set{f_{k}}$ from $f$. The first part of Theorem \ref{SSThm}
says that the plot of $|S_{f,\tilde{\epsilon}}^{\delta}|$ is concentrated
around the instantaneous frequency curves $\{\phi_{k}'\}$. The second
part of Theorem \ref{SSThm} tells us that we can reconstruct each
component $f_{k}$ by completing the inversion of the CWT, locally
over small frequency bands surrounding $\phi_{k}'$. In particular,
it implies that we can recover the amplitudes $A_{k}$ by taking absolute
values. Theorem \ref{SSThm} also suggests that components $f_{k}$
of small magnitude may be difficult to detect (as their CWTs become
smaller than $\tilde{\epsilon}$). \\

We can now state our new results on the robustness properties of Synchrosqueezing.
The following theorem shows that the results in Theorem \ref{SSThm}
essentially still hold if we perturb $f$ by a small (deterministic)
error term $e$.

\begin{thm}\label{SSStableThm}Let $f\in\mathcal{A}_{\epsilon,d}$
and suppose we have a corresponding $\epsilon$, $h$, $\psi$ and
$\Delta$ as given in Theorem \ref{SSThm}. Suppose that $g=f+e$,
where $e\in L^{\infty}$ is a small error term that satisfies $\left\Vert e\right\Vert _{L^{\infty}}\leq\epsilon/\max(\left\Vert \psi\right\Vert _{L^{1}},\left\Vert \psi'\right\Vert _{L^{1}})$.
For each $k$, let $M_{k}\geq1$ be the ``maximal frequency range''
given by

$M_{k}=$$\max\left(\frac{1}{1-\Delta}\left\Vert \phi_{k}'\right\Vert _{L^{\infty}},(1+\Delta)\left\Vert \frac{1}{\phi_{k}'}\right\Vert _{L^{\infty}}\right)$.
Then the following statements hold for each $k$:
\begin{enumerate}
\item Assume $a\in[M_{k}^{-1},M_{k}]$. For each point $(a,b)\in Z_{k}$
with $|W_{g}(a,b)|>M_{k}^{1/2}\epsilon+\tilde{\epsilon}$, we have
\[
|\omega_{g}(a,b)-\phi_{k}'(b)|\leq C_{2}\tilde{\epsilon}
\]
for some constant $C_{2}=O(M_{k})$. If $(a,b)\not\in Z_{k}$ for
any $k$, then $|W_{g}(a,b)|\leq M_{k}^{1/2}\epsilon+\tilde{\epsilon}$. 
\item There is a constant $C_{3}=O(M_{k})$ such that for all $b\in\mathbb{R}$,
\[
\left|\lim_{\delta\rightarrow0}\left(\frac{1}{\mathcal{R}_{\psi}}\int_{\{\eta:|\eta-\phi'_{k}(b)|\leq C_{2}\tilde{\epsilon}\}}S_{g,M_{k}^{1/2}\epsilon+\tilde{\epsilon}}^{\delta,M_{k}}(b,\eta)d\eta\right)-A_{k}(b)e^{2\pi i\phi_{k}(b)}\right|\leq C_{3}\tilde{\epsilon}.
\]

\end{enumerate}
\end{thm}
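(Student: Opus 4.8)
\emph{Proof proposal.}

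The plan is to reduce everything to Theorem \ref{SSThm} using linearity of the CWT, $W_g = W_f + W_e$ and $\partial_b W_g = \partial_b W_f + \partial_b W_e$, and then to control the ``error coefficients'' $W_e,\partial_b W_e$. A change of variables $u=(t-b)/a$ in the defining integrals gives $W_e(a,b)=a^{1/2}\int e(b+au)\,\overline{\psi(u)}\,du$ and $\partial_b W_e(a,b)=-a^{-1/2}\int e(b+au)\,\overline{\psi'(u)}\,du$, hence $|W_e(a,b)|\le a^{1/2}\norm{e}_{L^\infty}\norm{\psi}_{L^1}$ and $|\partial_b W_e(a,b)|\le a^{-1/2}\norm{e}_{L^\infty}\norm{\psi'}_{L^1}$; the hypothesis on $\norm{e}_{L^\infty}$ is precisely what turns these into $|W_e(a,b)|\le a^{1/2}\epsilon$ and $|\partial_b W_e(a,b)|\le a^{-1/2}\epsilon$. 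The structural fact driving the whole argument is that the scale band $Z_k$, whose $a$-section at $b$ is the interval $Z_k(b)=\bigl(\frac{1-\Delta}{\phi_k'(b)},\frac{1+\Delta}{\phi_k'(b)}\bigr)$, is contained in the strip $\{a\in[M_k^{-1},M_k]\}$ --- this is immediate from the definition of $M_k$, and it makes the ``Assume $a\in[M_k^{-1},M_k]$'' clause automatic on $Z_k$ --- so that $a^{1/2},a^{-1/2}\le M_k^{1/2}$ throughout $Z_k$, and more precisely $a$ is comparable to $1/\phi_k'(b)$ there.

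For part 1, suppose $(a,b)\in Z_k$ with $|W_g(a,b)|>M_k^{1/2}\epsilon+\tilde{\epsilon}$. Then $|W_f(a,b)|\ge|W_g(a,b)|-|W_e(a,b)|>\tilde{\epsilon}$, so Theorem \ref{SSThm}(1) gives $|\omega_f(a,b)-\phi_k'(b)|\le\tilde{\epsilon}$, and in particular $|\omega_f(a,b)|\le\phi_k'(b)+\tilde{\epsilon}$. Writing $\omega_g-\omega_f=\frac{1}{2\pi i}\,\frac{(\partial_b W_e)W_f-(\partial_b W_f)W_e}{W_g W_f}$ and using $\partial_b W_f=2\pi i\,\omega_f W_f$ to remove $\partial_b W_f$ yields
\[
|\omega_g(a,b)-\omega_f(a,b)|\ \le\ \frac{1}{2\pi}\,\frac{|\partial_b W_e(a,b)|}{|W_g(a,b)|}\ +\ |\omega_f(a,b)|\,\frac{|W_e(a,b)|}{|W_g(a,b)|}.
\]
In the denominators $|W_g(a,b)|>\tilde{\epsilon}=\epsilon^{1/3}$; in the numerators $|\partial_b W_e|\le a^{-1/2}\epsilon$, $|W_e|\le a^{1/2}\epsilon$, and $|\omega_f|\le\phi_k'(b)+\tilde{\epsilon}$. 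On $Z_k$ one has $a^{-1/2}\le M_k^{1/2}$ and $(\phi_k'(b)+\tilde{\epsilon})\,a^{1/2}$ is $O(\sqrt{\phi_k'(b)})=O(\sqrt{\norm{\phi_k'}_{L^\infty}})=O(M_k^{1/2})$, so the right-hand side is $O(M_k)\,\epsilon^{2/3}=O(M_k)\,\tilde{\epsilon}^2\le C_2\tilde{\epsilon}$ with $C_2=O(M_k)$ once $\tilde{\epsilon}$ is small; combined with $|\omega_f-\phi_k'|\le\tilde{\epsilon}$ this gives the first assertion. The second is immediate: if $(a,b)$ lies in no $Z_j$, then $|W_f(a,b)|\le\tilde{\epsilon}$ by Theorem \ref{SSThm}(1), so $|W_g(a,b)|\le|W_f(a,b)|+|W_e(a,b)|\le\tilde{\epsilon}+M_k^{1/2}\epsilon$.

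For part 2, I would mirror the proof of Theorem \ref{SSThm}(2): since $\norm{h}_{L^1}=1$, the family $\frac{1}{\delta}h\bigl(\frac{\cdot-\omega}{\delta}\bigr)$ is an approximate identity, so
\[
\lim_{\delta\to0}\frac{1}{\mathcal{R}_\psi}\int_{|\eta-\phi_k'(b)|\le C_2\tilde{\epsilon}}S_{g,M_k^{1/2}\epsilon+\tilde{\epsilon}}^{\delta,M_k}(b,\eta)\,d\eta\ =\ \frac{1}{\mathcal{R}_\psi}\int_{R_b}\frac{W_g(a,b)}{a^{3/2}}\,da,
\]
where $R_b=\{a\in[M_k^{-1},M_k]:|W_g(a,b)|>M_k^{1/2}\epsilon+\tilde{\epsilon},\ |\omega_g(a,b)-\phi_k'(b)|\le C_2\tilde{\epsilon}\}$. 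Using part 1 together with the separation hypothesis $\Delta<d/(1+d)$ --- which makes the bands $Z_j$ pairwise disjoint and, for $\tilde{\epsilon}$ small relative to $d/M_k$, keeps $\omega_g$ at distance $>C_2\tilde{\epsilon}$ from $\phi_k'$ on every $Z_j$ with $j\ne k$ --- one identifies $R_b=\{a:(a,b)\in Z_k,\ |W_g(a,b)|>M_k^{1/2}\epsilon+\tilde{\epsilon}\}$. Splitting $W_g=W_f+W_e$ on this set: the $W_e$ part contributes at most $\frac{\epsilon}{|\mathcal{R}_\psi|}\int_{Z_k(b)}\frac{da}{a}=\frac{\epsilon}{|\mathcal{R}_\psi|}\log\frac{1+\Delta}{1-\Delta}=O(\epsilon)$ (the $a^{1/2}$ in $W_e$ cancels against $a^{-3/2}$, so no $M_k$ appears here); and the $W_f$ part differs from $\frac{1}{\mathcal{R}_\psi}\int_{\{a:(a,b)\in Z_k,\,|W_f|>\tilde{\epsilon}\}}\frac{W_f}{a^{3/2}}\,da$ --- which, via the proof of Theorem \ref{SSThm}(2), lies within $C_1\tilde{\epsilon}$ of $A_k(b)e^{2\pi i\phi_k(b)}$ --- only over the set where $\tilde{\epsilon}<|W_f(a,b)|\le 2M_k^{1/2}\epsilon+\tilde{\epsilon}$, because $|W_g|>M_k^{1/2}\epsilon+\tilde{\epsilon}$ forces $|W_f|>\tilde{\epsilon}$. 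On that set $|W_f|=O(\tilde{\epsilon})$, and $\int_{Z_k(b)}a^{-3/2}\,da=O(\sqrt{\phi_k'(b)})=O(M_k^{1/2})$, so the discrepancy is $O(M_k^{1/2})\tilde{\epsilon}$. Adding the three errors gives the bound with $C_3=O(M_k)$.

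The step I expect to be the real obstacle is the bookkeeping of the $M_k$-dependence so that the constants genuinely come out $O(M_k)$ and not a higher power: replacing $a^{-3/2}$ by its crude bound $M_k^{3/2}$ and $|Z_k(b)|$ by $2\Delta M_k$ would produce something like $O(M_k^{5/2})$, so throughout one must instead exploit that $a$ is comparable to $1/\phi_k'(b)$ on $Z_k$ and that the stray factors of $a^{\pm1/2}$, $\phi_k'(b)$ and $1/\phi_k'(b)$ telescope to a single $\sqrt{\phi_k'(b)}\le\sqrt{\norm{\phi_k'}_{L^\infty}}\le M_k^{1/2}$ at the end. The only other delicate points are the ``$\tilde{\epsilon}$ sufficiently small'' provisos --- the boundary $\{|\omega_g-\phi_k'|=C_2\tilde{\epsilon}\}$ must be null in $a$ for the $\delta\to0$ limit to pass inside the integral, and $\tilde{\epsilon}$ must be small compared with $d/M_k$ for the bands $Z_j$ to be resolved under $\omega_g$ --- both harmless under the stated hypotheses.
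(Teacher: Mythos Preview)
Your proposal is correct and follows essentially the same route as the paper: bound $|W_e|\le a^{1/2}\epsilon$ and $|\partial_b W_e|\le a^{-1/2}\epsilon$, control $|\omega_g-\omega_f|$ algebraically and combine with Theorem~\ref{SSThm}(1), then for part~2 pass the $\delta\to0$ limit to reduce to comparing $\int_{R_b}a^{-3/2}W_g\,da$ with the analogous $f$-integral from Theorem~\ref{SSThm}(2). The only differences are cosmetic --- the paper bounds $\partial_b W_f$ directly by $M_k^{1/2}\|f\|_{L^\infty}\|\psi'\|_{L^1}$ rather than through $\omega_f$, and integrates the error terms over the full range $[M_k^{-1},M_k]$ where you use the tighter domain $Z_k(b)$ --- and if anything you are more explicit than the paper about the separation argument needed to exclude contributions from $Z_j$ with $j\ne k$.
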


\noindent \begin{proof} It is clear that
\begin{align}
|W_{f}(a,b)-W_{g}(a,b)|\leq\left\Vert f-g\right\Vert _{L^{\infty}}a^{1/2}\int_{-\infty}^{\infty}\left|\overline{\psi\left(t-\frac{b}{a}\right)}\right|dt\leq a^{1/2}\epsilon.\label{wfdiff}
\end{align}
Similarly, we also have $|\partial_{b}W_{f}(a,b)-\partial_{b}W_{g}(a,b)|\leq a^{-1/2}\epsilon$.
Now if $(a,b)\not\in Z_{k}$ for any $k$, then using Thm. \ref{SSThm}
gives
\begin{align}
|W_{g}(a,b)| & \leq|W_{g}(a,b)-W_{f}(a,b)|+|W_{f}(a,b)|\leq M_{k}^{1/2}\epsilon+\tilde{\epsilon}.\label{OutsideZk}
\end{align}
On the other hand, if for some $k$, $(a,b)\in Z_{k}$ and $|W_{g}(a,b)|>M_{k}^{1/2}\epsilon+\tilde{\epsilon}$,
then by (\ref{wfdiff}) and Thm. \ref{SSThm},
\begin{align}
|\omega_{g}(a,b)-\phi_{k}'(b)| & \leq|\omega_{g}(a,b)-\omega_{f}(a,b)|+|\omega_{f}(a,b)-\phi_{k}'(b)|\nonumber \\
 & \leq\left|\frac{W_{g}(a,b)-W_{f}(a,b)}{W_{g}(a,b)W_{f}(a,b)}\partial_{b}W_{f}(a,b)+\frac{\partial_{b}W_{f}(a,b)-\partial_{b}W_{g}(a,b)}{W_{g}(a,b)}\right|+\tilde{\epsilon}\nonumber \\
 & \leq\frac{M_{k}^{1/2}\epsilon}{(M_{k}^{1/2}\epsilon+\tilde{\epsilon})\tilde{\epsilon}}\left(M_{k}^{1/2}\left\Vert f\right\Vert _{L^{\infty}}\left\Vert \psi'\right\Vert _{L^{1}}\right)+\frac{M_{k}^{1/2}\epsilon}{M_{k}^{1/2}\epsilon+\tilde{\epsilon}}+\tilde{\epsilon}\nonumber \\
 & \leq C_{2}\tilde{\epsilon},\label{OmegaBound}
\end{align}
where $C_{2}$ depends only on $f$, $\psi$ and $M_{k}$. For the
second part of Thm. \ref{SSStableThm}, we fix $k$ and $b$ and use
the following calculation \cite[p.12]{Daubechies2010}:
\begin{align}
\lim_{\delta\rightarrow0} & \int_{|\eta-\phi_{k}'(b)|\leq\tilde{\epsilon}}S_{f,\tilde{\epsilon}}^{\delta}(b,\eta)d\eta=\int_{\mathrm{D}(b,f,\tilde{\epsilon},\tilde{\epsilon},\infty)}a^{-3/2}W_{f}(a,b)da,\label{SSPart1}
\end{align}
where
\begin{equation}
\mathrm{D}(b,f,\epsilon_{1},\epsilon_{2},M):=\{a:|W_{f}(a,b)|>\epsilon_{1},|\omega_{f}(a,b)-\phi_{k}'(b)|\leq\epsilon_{2},a\in[M^{-1},M]\}.\label{DSet}
\end{equation}

\noindent It is also shown in \cite[p. 12]{Daubechies2010} that if
$a\in\mathrm{D}(b,f,\tilde{\epsilon},\tilde{\epsilon},\infty)$, then
$(a,b)\in Z_{k}$, so $M_{k}^{-1}\leq a\leq M_{k}$. This means that
in (\ref{SSPart1}), we can replace $S_{f,\tilde{\epsilon}}^{\delta}(b,\eta)$
by $S_{f,\tilde{\epsilon}}^{\delta,M_{k}}(b,\eta)$ and $\mathrm{D}(b,f,\tilde{\epsilon},\tilde{\epsilon},\infty)$
by $\mathrm{D}(b,f,\tilde{\epsilon},\tilde{\epsilon},M_{k})$. We
can also get a result identical to (\ref{SSPart1}) for $g$ by simply
repeating the argument in \cite{Daubechies2010}. First, note that
as $\delta\to0$, the expression
\begin{equation}
\int_{|\eta-\phi'_{k}(b)|\leq C_{2}\tilde{\epsilon}}a^{-3/2}W_{g}(a,b)\frac{1}{\delta}h\left(\frac{\eta-\omega_{g}(a,b)}{\delta}\right)d\eta\label{ApproxId}
\end{equation}
converges to $a^{-3/2}W_{g}(a,b)\chi_{\set{|\omega_{g}(a,b)-\phi'_{k}(b)|<C_{2}\tilde{\epsilon}}}(a)$
for almost all $a\in[M_{k}^{-1},M_{k}]$, where $\chi$ is the characteristic
function of a set. This shows that
\begin{align}
 & \lim_{\delta\rightarrow0}\int_{|\eta-\phi_{k}'(b)|\leq C_{2}\tilde{\epsilon}}S_{g,M_{k}^{1/2}\epsilon+\tilde{\epsilon}}^{\delta,M_{k}}(b,\eta)d\eta\nonumber \\
= & \int_{(a,b)\in\Gamma_{g,M_{k}^{1/2}\epsilon+\tilde{\epsilon}}^{M_{k}}}\lim_{\delta\to0}\int_{|\eta-\phi'_{k}(b)|\leq C_{2}\tilde{\epsilon}}a^{-3/2}W_{g}(a,b)\frac{1}{\delta}h\left(\frac{\eta-\omega_{g}(a,b)}{\delta}\right)d\eta da\label{TermChange}\\
= & \int_{\mathrm{D}(b,g,M_{k}^{1/2}\epsilon+\tilde{\epsilon},C_{2}\tilde{\epsilon},M_{k})}a^{-3/2}W_{g}(a,b)da.\label{SSPart2}
\end{align}
We can justify exchanging the order of integrations and limits in
(\ref{TermChange}) by the Fubini and dominated convergence theorems,
since (\ref{ApproxId}) is bounded by $|a^{-3/2}W_{g}(a,b)|\in L^{1}(\{a:|W_{g}(a,b)|>M_{k}^{1/2}\epsilon+\tilde{\epsilon},a\in[M_{k}^{-1},M_{k}]\})$
for all $\delta$. We also note that (\ref{wfdiff}) and (\ref{OmegaBound})
show that in the set $\mathrm{D}(b,f,\tilde{\epsilon},\tilde{\epsilon},M_{k})\backslash\mathrm{D}(b,g,M_{k}^{1/2}\epsilon+\tilde{\epsilon},C_{2}\tilde{\epsilon},M_{k})$,
we have $|W_{f}(a,b)|\leq2M_{k}^{1/2}\epsilon+\tilde{\epsilon}$.
We can now use the result of Thm. \ref{SSThm} along with (\ref{OutsideZk}),
(\ref{SSPart1}) and (\ref{SSPart2}) to find that
\begin{align*}
 & \left|\lim_{\delta\rightarrow0}\int_{|\eta-\phi_{k}'(b)|\leq\tilde{\epsilon}}S_{f,\tilde{\epsilon}}^{\delta,M_{k}}(b,\eta)d\eta-\lim_{\delta\rightarrow0}\int_{|\eta-\phi_{k}'(b)|\leq C_{2}\tilde{\epsilon}}S_{g,M_{k}^{1/2}\epsilon+\tilde{\epsilon}}^{\delta,M_{k}}(b,\eta)d\eta\right|\\
= & \left|\int_{\mathrm{D}(b,f,\tilde{\epsilon},\tilde{\epsilon},M_{k})}a^{-3/2}W_{f}(a,b)-\int_{\mathrm{D}(b,g,M_{k}^{1/2}\epsilon+\tilde{\epsilon},C_{2}\tilde{\epsilon},M_{k})}a^{-3/2}W_{g}(a,b)da\right|\\
\leq & \int_{\mathrm{D}(b,g,M_{k}^{1/2}\epsilon+\tilde{\epsilon},C_{2}\tilde{\epsilon},M_{k})}\left|a^{-3/2}(W_{f}(a,b)-W_{g}(a,b))\right|da\\
 & \quad+\int_{\mathrm{D}(b,f,\tilde{\epsilon},\tilde{\epsilon},M_{k})\backslash\mathrm{D}(b,g,M_{k}^{1/2}\epsilon+\tilde{\epsilon},C_{2}\tilde{\epsilon},M_{k})}\left|a^{-3/2}W_{f}(a,b)\right|da\\
 & \leq\int_{M_{k}^{-1}}^{M_{k}}a^{-1}\epsilon da+\int_{M_{k}^{-1}}^{M_{k}}a^{-3/2}\left(2M_{k}^{1/2}\epsilon+\tilde{\epsilon}\right)da\\
 & \leq(2\log M_{k})\epsilon+2\left(M_{k}^{1/2}-M_{k}^{-1/2}\right)\left(2M_{k}^{1/2}\epsilon+\tilde{\epsilon}\right)\\
 & \leq C_{3}\tilde{\epsilon}.
\end{align*}
Combining this with the result of Thm. \ref{SSThm} finishes the proof.

\noindent \end{proof}

Thm. \ref{SSStableThm} shows that each component $f_{k}$ can be
recovered with an accuracy proportional to the perturbation $e$ and
its maximal frequency range $M_{k}$, with mid-range IFs ($M_{k}$
close to $1$) resulting in the best estimates. In addition, Thm.
\ref{SSStableThm} implies that we can replace a continuous-time function
$f$ with discrete approximations of it. In many applications, we
only have a collection of samples $\{f(t_{n})\}$ available instead
of the whole function $f$, where $\{t_{n}\}$ is a sequence of (possibly
nonuniformly spaced) sampling points. We can address this situation
in the following way.

\begin{cor}\label{cor:splinestable} Let $f_{s}\in C^{2}$ be the
cubic spline interpolant formed from $\{f(t_{n})\}$ and define ${\displaystyle \Lambda=\sup_{n}|t'_{n+1}-t'_{n}|}$.
Then the errors in the estimating $\phi_{k}'(b)$ and $f_{k}(b)$
from $f_{s}$ are both $O(M_{k}\Lambda^{4/3})$ for all $b$.\end{cor}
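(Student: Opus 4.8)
The plan is to view the cubic spline interpolant $f_s$ as a bounded deterministic perturbation of $f$ and then apply Theorem~\ref{SSStableThm}. First I would quantify the interpolation error: by the classical $L^{\infty}$ bound for cubic spline interpolation on a quasi-uniform mesh (the Hall--Meyer estimate, or de~Boor's analysis), $\|f-f_s\|_{L^{\infty}}\le C\Lambda^{4}$, where $C$ depends on a bound for the fourth derivative of $f$ and on the mesh ratio of $\{t_n\}$. Setting $e:=f_s-f$, we obtain $g:=f_s=f+e$ with $\|e\|_{L^{\infty}}=O(\Lambda^{4})$. Note that only this $L^{\infty}$ bound on $e$ is needed: the perturbation estimates in the proof of Theorem~\ref{SSStableThm} (inequality \eqref{wfdiff} and its $\partial_b$ analogue) control $|W_f-W_g|$ and $|\partial_b W_f-\partial_b W_g|$ purely through $\|e\|_{L^{\infty}}$ and $L^{1}$ norms of $\psi,\psi'$.

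Next I would feed this into Theorem~\ref{SSStableThm}. Since the IMT conditions in the definition of $\mathcal{A}_{\epsilon,d}$ only weaken when the smoothness parameter increases, $f$ also lies in $\mathcal{A}_{\epsilon',d}$ for every $\epsilon'\ge\epsilon$, so for $\Lambda$ small one may invoke the theorem with the effective parameter $\epsilon_{\Lambda}:=\max\!\big(\epsilon,\ \max(\|\psi\|_{L^{1}},\|\psi'\|_{L^{1}})\,\|e\|_{L^{\infty}}\big)=\Theta(\Lambda^{4})$ in the regime where the interpolation error dominates; this is exactly the hypothesis $\|e\|_{L^{\infty}}\le\epsilon_{\Lambda}/\max(\|\psi\|_{L^{1}},\|\psi'\|_{L^{1}})$ demanded by Theorem~\ref{SSStableThm}. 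Part~1 of that theorem then yields, for all $b$, the instantaneous-frequency bound $|\omega_{f_s}(a,b)-\phi_k'(b)|\le C_2\,\epsilon_{\Lambda}^{1/3}$ on the scale band, and Part~2 yields that the localized CWT inversion of $f_s$ reconstructs $f_k(b)$ to within $C_3\,\epsilon_{\Lambda}^{1/3}$, with $C_2,C_3=O(M_k)$. Since $\epsilon_{\Lambda}^{1/3}=O(\Lambda^{4/3})$, both errors are $O(M_k\Lambda^{4/3})$, which is the assertion.

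The step I expect to be the main obstacle is the regularity mismatch. The definition of $\mathcal{A}_{\epsilon,d}$ only forces $f\in C^{1}$ (as $A_k\in C^{1}$, $\phi_k\in C^{2}$), which is far too weak for the $O(\Lambda^{4})$ spline rate: a merely $C^{2}$ signal gives $\|f-f_s\|_{L^{\infty}}=O(\Lambda^{2})$ and hence only $O(M_k\Lambda^{2/3})$. To reach exponent $4/3$ one must either assume the amplitudes $A_k$ and phases $\phi_k$ are smooth enough that $f$ has a bounded fourth derivative (harmless in the intended applications) or replace the quoted spline estimate by one adapted to the available smoothness, changing the exponent accordingly. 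A secondary point to handle with care is the double role of $\epsilon$ in Theorem~\ref{SSStableThm}: it simultaneously bounds the intrinsic smoothness of $f$ and the perturbation $\|e\|_{L^{\infty}}$, so the corollary is most honestly read as saying that the \emph{additional} error incurred by replacing $f$ with $f_s$ is $O(M_k\Lambda^{4/3})$, on top of the intrinsic $O(M_k\epsilon^{1/3})$ error already present in Theorem~\ref{SSThm}. Finally, one needs the sample points to be quasi-uniform for the spline constant $C$ to remain bounded; this is implicitly built into describing the mesh by the single parameter $\Lambda$.
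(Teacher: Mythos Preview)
Your proposal is correct and follows essentially the same route as the paper: invoke the standard $O(\Lambda^{4})$ cubic-spline error bound (the paper cites the constant $5/384$ from \cite{Stewart1998}) and then apply Theorem~\ref{SSStableThm} with $\epsilon\asymp\Lambda^{4}$ to obtain $\tilde\epsilon=O(\Lambda^{4/3})$ and constants of order $M_k$. Your caveats about the regularity mismatch and the double role of $\epsilon$ are well taken---the paper's two-line proof simply writes $\|f^{(4)}\|_{L^{\infty}}$ without comment, tacitly assuming the additional smoothness you flag.
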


\noindent \begin{proof}This follows from Thm. \ref{SSStableThm}
and the following standard estimate on cubic spline approximations
\cite[p. 97]{Stewart1998}:

\begin{center}
$\left\Vert f_{s}-f\right\Vert _{L^{\infty}}\leq\frac{5}{384}\Lambda^{4}\|f^{(4)}\|_{L^{\infty}}$ 
\par\end{center}

\end{proof}

\noindent This means that we can work with the spline $f_{s}$ instead
of $f$, and as long as the minimum sampling rate $\Lambda^{-1}$
is high enough, the results will be close. In practice, we find that
the errors are localized in time to areas of low sampling rate, low
component amplitude, and/or high component frequency (see, e.g., \S\ref{sec:wmisc}).\\

The second result of this paper is that Sychrosqueezing is also robust
to additive Gaussian white noise. We start by defining Gaussian white
noise in continuous-time. Let $\mathcal{S}$ be the Schwartz
class of smooth functions with rapid decay (see \cite{K96}). A (real)
stationary {\em generalized Gaussian process} $G$ is a random
linear functional on $\mathcal{S}$ such that all finite collections
$\{G(f_{i})\}$ with $f_{i}\in\mathcal{S}$ are jointly Gaussian variables
and have the same distribution for all translates of $f_{i}$. Such
a process is characterized by a mean functional $\mathbb{E}(G(f_{1}))=T(f_{1})$
and a covariance functional $\mathbb{E}((G(f_{1})-T(f_{1}))\overline{(G(f_{2})-T(f_{2}))})=\left\langle f_{1},Rf_{2}\right\rangle $
for some operators $T:\mathcal{S}\to\mathcal{S}$ and $R:\mathcal{S}\to\mathcal{S}$,
where $\left\langle f_{1},f_{2}\right\rangle =\int_{-\infty}^{\infty}f_{1}(t)\overline{f_{2}(t)}dt$
is the $L^{2}$ inner product. {\em Gaussian white noise} $N$
with power $\sigma^{2}$ is such a process with $T=0$
and $R=\sigma^{2}I$, where $I$ is the identity operator. We refer
to \cite{K96} for more details on these concepts and to \cite{Ga08}
for basic facts on complex Gaussian variables that are used below.

\begin{thm}\label{SSStableThm2}Let $f\in\mathcal{A}_{\epsilon,d}$
and suppose we have a corresponding $\epsilon$, $h$, $\psi$, $\Delta$
and $M_{k}$ as given in Thm \ref{SSThm} and \ref{SSStableThm},
with the additional assumptions that $\psi\in\mathcal{S}$ and $\left|\left\langle \psi,\psi'\right\rangle \right|<\|\psi\|_{L^{2}}\|\psi'\|_{L^{2}}$.
Let $g=f+N$, where $N$ is Gaussian white noise with spectral density
$\epsilon^{2+p}$ for some $p>0$. Then the following statements hold
for each $k$:
\begin{enumerate}
\item Assume $a\in[M_{k}^{-1},M_{k}]$. For each point $(a,b)\in Z_{k}$
with $|W_{f}(a,b)|>\tilde{\epsilon}$, there are constants $E_1$ and
$C_{2}$' such that with probability $1-e^{-E_1\epsilon^{-p}}$,
\[
|\omega_{g}(a,b)-\phi_{k}'(b)|\leq C_{2}'\tilde{\epsilon}.
\]
If $(a,b)\not\in Z_{k}$ for any $k$, then with probability $1-e^{-E_2\epsilon^{-p}}$ for some constant $E_2$,
$|W_{g}(a,b)|\leq\tilde{\epsilon}+\frac{1}{2}\epsilon$.
\item There is a constant $C_{3}'$ such that with probability $1-e^{-E_1\epsilon^{-p}}$,
we have for all $b\in\mathbb{R}$ that
\[
\left|\lim_{\delta\rightarrow0}\left(\frac{1}{\mathcal{R}_{\psi}}\int\limits _{\{\eta:|\eta-\phi'_{k}(b)|\leq C_{2}'\tilde{\epsilon}\}}S_{g,M_{k}^{1/2}\epsilon+\tilde{\epsilon}}^{\delta,M_{k}}(b,\eta)d\eta\right)-A_{k}(b)e^{2\pi i\phi_{k}(b)}\right|\leq C_{3}'\tilde{\epsilon}.
\]

\end{enumerate}
\noindent \end{thm}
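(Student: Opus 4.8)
The plan is to reduce the noisy case to the deterministic case of Theorem \ref{SSStableThm} by controlling, with high probability, the size of the wavelet transform of the noise $N$ and of its time derivative. The key observation is that $W_N(a,b)$ and $\partial_b W_N(a,b)$ are, for each fixed $(a,b)$, complex Gaussian random variables whose variances can be computed explicitly from the covariance functional $R = \epsilon^{2+p} I$. First I would write $W_N(a,b) = N\big(a^{-1/2}\overline{\psi(\tfrac{\cdot - b}{a})}\big)$ so that $\mathbb{E}|W_N(a,b)|^2 = \epsilon^{2+p} a^{-1}\int |\psi(\tfrac{t-b}{a})|^2 dt = \epsilon^{2+p}\|\psi\|_{L^2}^2$, and similarly $\mathbb{E}|\partial_b W_N(a,b)|^2 = \epsilon^{2+p} a^{-3}\|\psi'\|_{L^2}^2$; the extra hypothesis $|\langle\psi,\psi'\rangle| < \|\psi\|_{L^2}\|\psi'\|_{L^2}$ ensures the pseudo-covariance does not degenerate, so that standard tail bounds for (possibly improper) complex Gaussians apply. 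A Gaussian tail estimate then gives, for a fixed $(a,b)$ with $a\in[M_k^{-1},M_k]$, that $|W_N(a,b)| \le \tfrac12\epsilon$ and $a^{-1}|\partial_b W_N(a,b)| \le C\epsilon$ (the normalization matching the bounds $a^{1/2}\epsilon$ and $a^{-1/2}\epsilon$ used in \eqref{wfdiff}) with probability at least $1 - e^{-E\epsilon^{-p}}$, since the ratio of threshold to standard deviation is of order $\epsilon^{1-(2+p)/2} = \epsilon^{-p/2}$, whose square is $\epsilon^{-p}$.

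With these bounds in force, part (1) is essentially immediate: on the favorable event, $g = f + N$ is a deterministic perturbation of $f$ of exactly the size controlled in Theorem \ref{SSStableThm} (with $e = N$ restricted to the relevant $(a,b)$), so the same algebra that produced \eqref{OutsideZk} and \eqref{OmegaBound} yields $|W_g(a,b)| \le \tilde\epsilon + \tfrac12\epsilon$ outside all $Z_k$ and $|\omega_g(a,b) - \phi_k'(b)| \le C_2'\tilde\epsilon$ inside $Z_k$ when $|W_f(a,b)| > \tilde\epsilon$ (note the hypothesis here is on $W_f$, not $W_g$, which slightly simplifies the denominator estimate in \eqref{OmegaBound} since $|W_f| > \tilde\epsilon$ and $|W_g| \gtrsim \tilde\epsilon$). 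For part (2), I would follow the structure of the proof of Theorem \ref{SSStableThm} verbatim: the identity \eqref{SSPart1}–\eqref{SSPart2} and the Fubini/dominated-convergence justification carry over to $g = f + N$ pathwise on the favorable event, and the final chain of inequalities bounding $\int a^{-3/2}|W_f - W_g|\,da$ plus the measure of the symmetric difference of the two $\mathrm{D}$-sets goes through with $\epsilon$ replaced by the realized noise bound, giving $C_3'\tilde\epsilon$.

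The one genuine subtlety — and the step I expect to be the main obstacle — is passing from a pointwise-in-$(a,b)$ high-probability statement to a statement that holds simultaneously for all $a \in [M_k^{-1},M_k]$ and all $b\in\mathbb{R}$, which is what part (2) requires (the integral over $a$, and the ``for all $b$'' quantifier). The process $(a,b)\mapsto W_N(a,b)$ is a.s. smooth because $\psi\in\mathcal{S}$, so one can get a uniform bound over the compact scale interval $[M_k^{-1},M_k]$ via a chaining/Borell–TIS argument or by bounding the modulus of continuity of $W_N$ together with an $\epsilon$-net; the exponent in $1 - e^{-E_1\epsilon^{-p}}$ is robust to the union bound over a polynomially-sized net since $e^{-E\epsilon^{-p}}$ beats any polynomial in $\epsilon^{-1}$. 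The ``for all $b$'' claim is more delicate because $b$ ranges over all of $\mathbb{R}$; here I would invoke stationarity of $N$ together with the rapid decay of $\psi$ and the fact that, in the reconstruction formula, only scales $a\in[M_k^{-1},M_k]$ contribute, so $W_N(a,b)$ effectively depends on $f$ over a bounded window around $b$, and a standard localization plus the exponential tail (summed over a lattice of $b$'s) yields the uniform statement. I would remark that this is exactly the place where the strong separation built into $\mathcal{A}_{\epsilon,d}$ and the compact support of $\widehat\psi$ are used, and defer the detailed chaining estimate, citing \cite{K96, Ga08} for the requisite Gaussian-process facts.
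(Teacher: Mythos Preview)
Your overall strategy---compute the Gaussian moments of $W_N(a,b)$ and $\partial_b W_N(a,b)$, establish a high-probability bound $|N(\psi_{a,b})|,|N(\psi'_{a,b})|<\tfrac12\epsilon$, and then rerun the deterministic argument of Theorem~\ref{SSStableThm} on that event---is exactly the paper's approach. A few points of divergence are worth noting.

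First, you misread the role of the hypothesis $|\langle\psi,\psi'\rangle|<\|\psi\|_{L^2}\|\psi'\|_{L^2}$. The pseudo-covariances $\mathbb{E}(N(\psi_{a,b})^2)$, $\mathbb{E}(N(\psi'_{a,b})^2)$, $\mathbb{E}(N(\psi_{a,b})N(\psi'_{a,b}))$ vanish not because of that inequality but because $\widehat\psi$ is supported in $(0,\infty)$, so e.g.\ $\langle\psi,\overline\psi\rangle=\int\widehat\psi(\xi)\widehat\psi(-\xi)\,d\xi=0$. This is what makes $(N(\psi_{a,b}),N(\psi'_{a,b}))$ circularly symmetric. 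The strict Cauchy--Schwarz hypothesis is used instead to guarantee that the \emph{covariance} matrix $V=\begin{pmatrix}\|\psi\|_{L^2}^2 & a^{-1}\langle\psi,\psi'\rangle\\ a^{-1}\langle\psi',\psi\rangle & a^{-2}\|\psi'\|_{L^2}^2\end{pmatrix}$ is invertible, so that the joint density exists and can be diagonalized as $V^{-1}=U^*DU$. The paper then computes $P(G_1\cap G_2)$ by an explicit polar-coordinate integration after this unitary change of variables, obtaining the product form $(1-e^{-D_{11}\epsilon^{-p}/8})(1-e^{-D_{22}\epsilon^{-p}/8})$; your generic ``Gaussian tail estimate'' skips this and would need the circular symmetry to be made precise. (Incidentally, $\mathbb{E}|\partial_bW_N|^2=\epsilon^{2+p}a^{-2}\|\psi'\|_{L^2}^2$, not $a^{-3}$.)

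Second, the paper does \emph{not} perform any chaining, Borell--TIS, or net argument: the probability bounds in both parts are established pointwise in $(a,b)$, and the ``for all $b$'' in part~(2) is handled by fixing $b$ and conditioning on the event $H_k\cap G_1\cap G_2$ for that $b$. Your proposed uniformization machinery is therefore more elaborate than what the paper actually does; you have correctly identified a genuine subtlety, but for the purpose of matching the paper's proof you should simply work at a fixed $(a,b)$ and not attempt the uniform statement.
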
\begin{proof} The CWT of $g$, $W_{g}(a,b)$,
is understood as the Gaussian variable $W_{f}(a,b)+\overline{N(\psi_{a,b})}$,
where $\psi_{a,b}(x)=a^{-1/2}\psi\left(\frac{x-b}{a}\right)$. We
have $\mathbb{E}(N(\psi_{a,b}))=0$,
\[
\mathbb{E}(N(\psi_{a,b})\overline{N(\psi_{a,b})})=\frac{\epsilon^{2+p}}{a}\int_{-\infty}^{\infty}\psi\left(\frac{x}{a}\right)\overline{\psi\left(\frac{x}{a}\right)}dx=\epsilon^{2+p}\left\langle \psi,\psi\right\rangle =\epsilon^{2+p}\|\psi\|_{L^{2}}^{2},
\]
and since $\mathrm{supp}(\widehat{\psi})$ is positive,
\[
\mathbb{E}(N(\psi_{a,b})^{2})=\epsilon^{2+p}\left\langle \psi,\overline{\psi}\right\rangle =\epsilon^{2+p}\int_{-\infty}^{\infty}\hat{\psi}(\xi)\hat{\psi}(-\xi)d\xi=0.
\]
Similarly, $\partial_{b}W_{g}(a,b)$ is the random variable $\partial_{b}W_{f}(a,b)+\overline{N(\psi_{a,b}')}$,
where $\psi{}_{a,b}'(x)=a^{-3/2}\psi'\left(\frac{x-b}{a}\right)$.
By the same arguments as before and noting that $\mathrm{supp}(\widehat{\psi'})\subset\mathrm{supp}(\widehat{\psi})$,
we obtain the formulas:
\begin{align*}
\mathbb{E}(N(\psi_{a,b}')) & =\mathbb{E}(N(\psi_{a,b}')^{2})=\mathbb{E}(N(\psi_{a,b})N(\psi_{a,b}'))=0\\
\mathbb{E}(N(\psi_{a,b}')\overline{N(\psi_{a,b}')}) & =\epsilon^{2+p}a^{-2}\left\Vert \psi'\right\Vert _{L^{2}}^{2}\\
\mathbb{E}(N(\psi_{a,b})\overline{N(\psi_{a,b}')}) & =\epsilon^{2+p}a^{-1}\left\langle \psi,\psi'\right\rangle .
\end{align*}
This shows that the Gaussian variables $N(\psi_{a,b})$ and $(N(\psi_{a,b}),N(\psi_{a,b}'))\in\mathbb{C}^{2}$
have zero pseudo-covariance matrices, so they are circularly symmetric.
If we define the matrix
\[
V=\left(\begin{array}{cc}
\left\Vert \psi\right\Vert _{L^{2}}^{2} & a^{-1}\left\langle \psi,\psi'\right\rangle \\
a^{-1}\left\langle \psi',\psi\right\rangle  & a^{-2}\left\Vert \psi'\right\Vert _{L^{2}}^{2}
\end{array}\right),
\]
then the distribution of $(N(\psi_{a,b}),N(\psi_{a,b}'))$ is given
by
\[
\frac{e^{-\frac{1}{\epsilon^{2+p}}\overline{(w,z)}\cdot V^{-1}(w,z)}}{\pi^{2}\epsilon^{4+2p}\det V}dwdz.
\]
Since $V$ is invertible and self-adjoint, we can write $V^{-1}=U^{*}DU$,
where $D$ is diagonal and $U$ is unitary. We have $D_{11}D_{22}=\det(V^{-1})=\det(V)^{-1}$
and $D_{11}+D_{22}=\mathrm{trace}(V^{-1})=(\left\Vert \psi\right\Vert _{L^{2}}^{2}+a^{-2}\left\Vert \psi'\right\Vert _{L^{2}}^{2})\det(V)^{-1}$.\\

\noindent For a point $(a,b)$, we now define the events $G_{1}=\{|N(\psi_{a,b})|<\frac{\epsilon}{2}\}$,
$G_{2}=\{|N(\psi_{a,b}')|<\frac{\epsilon}{2}\}$ and $H_{k}=\{|\omega_{g}(a,b)-\phi_{k}'(b)|\leq C_{2}'\tilde{\epsilon}\}$
for each $k$. We want to estimate $P(G_{1})$ and $P\left(G_{1}\cap G_{2}\right)$.
Using the above calculations and taking $E_2 = \frac{1}{4}\|\psi\|_{L^{2}}^{-2}$, we find that
\begin{eqnarray*}
P(G_{1}) & = & \frac{1}{\pi\epsilon^{2+p}\|\psi\|_{L^{2}}^{2}}\int_{|z|<\frac{\epsilon}{2}}e^{-\frac{|z|^{2}}{\epsilon^{2+p}}\|\psi\|_{L^{2}}^{-2}}dz\\
 & = & \frac{2}{\epsilon^{2+p}\|\psi\|_{L^{2}}^{2}}\int_{0}^{\epsilon/2}re^{-\frac{r^{2}}{\epsilon^{2+p}}\|\psi\|_{L^{2}}^{-2}}dr\\
 & = & 2\int_{0}^{(4\epsilon^{p}\|\psi\|_{L^{2}}^{2})^{-1/2}}re^{-r^{2}}dr\\
 & = & 1-e^{-E_2 \epsilon^{-p}}.
\end{eqnarray*}
Let $E_1=\min_{a\in[M_{k}^{-1},M_{k}]}\frac{1}{8}\left(D_{11}+D_{22}\right)>0$.
We note that any rotated polydisk of radius $r$ in $(w,z)\in\mathbb{C}^{2}$ contains a smaller polydisk of radius $2^{-1/2}r$ that is aligned with the $w$ and $z$ planes,
and use the transformation $(w',z')=U(w,z)$ to estimate

\begin{eqnarray*}
P(G_{1}\cap G_{2}) & = & \int_{\{|w|<\frac{\epsilon}{2},|z|<\frac{\epsilon}{2}\}}\frac{e^{-\frac{1}{\epsilon^{2+p}}\overline{(w,z)}\cdot V^{-1}(w,z)}}{\pi^{2}\epsilon^{4+2p}\det V}dwdz\\
 & = & \int_{\{|(0,1)\cdot U^{*}(w',z')|<\frac{\epsilon}{2},|(1,0)\cdot U^{*}(w',z')|<\frac{\epsilon}{2}\}}\frac{e^{-\frac{1}{\epsilon^{2+p}}\left(D_{11}|w'|^{2}+D_{22}|z'|^{2}\right)}}{\pi^{2}\epsilon^{4+2p}\det V}dw'dz'\\
 & \geq & \int_{\{|w'|^{2}+|z'|^{2}<\frac{\epsilon^{2}}{4}\}}\frac{e^{-\frac{1}{\epsilon^{2+p}}\left(D_{11}|w'|^{2}+D_{22}|z'|^{2}\right)}}{\pi^{2}\epsilon^{4+2p}\det V}dw'dz'\\
 & \geq & \int_{\{|z'|<2^{-3/2}\epsilon,|w'|<2^{-3/2}\epsilon\}}\frac{e^{-\frac{1}{\epsilon^{2+p}}\left(D_{11}|w'|^{2}+D_{22}|z'|^{2}\right)}}{\pi^{2}\epsilon^{4+2p}\det V}dw'dz'\\
 & = & \frac{4}{D_{11}D_{22}\det V}\int_{0}^{(8\epsilon^{p}D_{22}^{-1})^{-1/2}}\int_{0}^{(8\epsilon^{p}D_{11}^{-1})^{-1/2}}re^{-r^{2}}se^{-s^{2}}drds\\
 & = & \left(1-e^{-\frac{1}{8}\epsilon^{-p}D_{11}}\right)\left(1-e^{-\frac{1}{8}\epsilon^{-p}D_{22}}\right)\\
 & > & 1-e^{-E_1\epsilon^{-p}}.
\end{eqnarray*}
Now let $C_{2}'=2M_{k}^{1/2}\left\Vert f\right\Vert _{L^{\infty}}\left\Vert \psi'\right\Vert _{L^{1}}+3$.
If $(a,b)\not\in Z_{k}$ for any $k$, then Theorem \ref{SSThm} shows
that $G_{1}$ implies $|W_{g}(a,b)|<\tilde{\epsilon}+\frac{1}{2}\epsilon$.
Conversely, if $(a,b)\in Z_{k}$ for some $k$, we follow the same
arguments as in Theorem \ref{SSStableThm} to find that
\begin{align*}
P\left(H_{k}\right)\geq & P\left(H_{k}|G_{1}\cap G_{2}\right)P\left(G_{1}\cap G_{2}\right)\\
\geq & P\bigg(\frac{1}{|W_{g}(a,b)|}\left|\frac{\partial_{b}W_{f}(a,b)}{W_{f}(a,b)}(W_{g}(a,b)-W_{f}(a,b))-(\partial_{b}W_{g}(a,b)-\partial_{b}W_{f}(a,b))\right|\\
 & +\tilde{\epsilon}\leq C_{2}'\tilde{\epsilon}\bigg|G_{1}\cap G_{2}\bigg)P\left(G_{1}\cap G_{2}\right)\\
\geq & P\left(\frac{1}{\tilde{\epsilon}-\frac{1}{2}\epsilon}\left(\left|\frac{\partial_{b}W_{f}(a,b)}{W_{f}(a,b)}\right|\epsilon+\epsilon\right)+\tilde{\epsilon}\leq C_{2}'\tilde{\epsilon}\right)P\left(G_{1}\cap G_{2}\right)\\
\geq & P\left(2M_{k}^{1/2}\left\Vert f\right\Vert _{L^{\infty}}\left\Vert \psi'\right\Vert _{L^{1}}+3\leq C_{2}'\right)P\left(G_{1}\cap G_{2}\right)\\
= & P\left(G_{1}\cap G_{2}\right).
\end{align*}
The second statement in Theorem \ref{SSStableThm2} can be shown in
an analogous way. Let $C_{3}'=2M_{k}^{1/2}((M_{k}^{1/2}+1)\tilde{\epsilon}^{2}+1)+C_{1}$
and recall the definition (\ref{DSet}). We fix $k$ and use the above
result to estimate
\begin{align*}
 & P\left(\left|\lim_{\delta\rightarrow0}\int{}_{|\eta-\phi_{k}'(b)|\leq C_{2}'\tilde{\epsilon}}S_{g,\tilde{\epsilon}-\frac{1}{2}\epsilon}^{\delta,M_{k}}(b,\eta)d\eta-A_{k}(b)e^{i\phi_{k}(b)}\right|<C_{3}'\tilde{\epsilon}\right)\\
\geq & P\bigg(\left|\lim_{\delta\rightarrow0}\int{}_{|\eta-\phi_{k}'(b)|\leq\tilde{\epsilon}}S_{f,\tilde{\epsilon}}^{\delta,M_{k}}(b,\eta)d\eta-\lim_{\delta\rightarrow0}\int{}_{|\eta-\phi_{k}'(b)|\leq C_{2}'\tilde{\epsilon}}S_{g,\tilde{\epsilon}-\frac{1}{2}\epsilon}^{\delta,M_{k}}(b,\eta)d\eta\right|+\\
 & C_{1}\tilde{\epsilon}<C_{3}'\tilde{\epsilon}\bigg|H_{k}\cap G_{1}\cap G_{2}\bigg)P(H_{k}\cap G_{1}\cap G_{2})\\
= & P\bigg(\left|\int_{\mathrm{D}(b,g,\tilde{\epsilon}-\frac{1}{2}\epsilon,C_{2}'\tilde{\epsilon},M_{k})}a^{-3/2}N(\psi_{a,b})da+\int_{\mathrm{D}(b,f,\tilde{\epsilon},\tilde{\epsilon},M_{k})\backslash\mathrm{D}(b,g,\tilde{\epsilon}-\frac{1}{2}\epsilon,C_{2}'\tilde{\epsilon},M_{k})}a^{-3/2}W_{f}(a,b)da\right|+\\
 & C_{1}\tilde{\epsilon}<C_{3}'\tilde{\epsilon}\bigg|H_{k}\cap G_{1}\cap G_{2}\bigg)P(H_{k}\cap G_{1}\cap G_{2})\\
\geq & P\left(\int_{1/M_{k}}^{M_{k}}a^{-3/2}\frac{\epsilon}{2}da+\int_{1/M_{k}}^{M_{k}}a^{-3/2}(M_{k}^{1/2}\epsilon+\tilde{\epsilon}+\frac{\epsilon}{2})da+C_{1}\tilde{\epsilon}<C_{3}'\tilde{\epsilon}\bigg|H_{k}\cap G_{1}\cap G_{2}\right)P(H_{k}\cap G_{1}\cap G_{2})\\
= & P\left(2(M_{k}^{1/2}-M_{k}^{-1/2})((M_{k}^{1/2}+1)\tilde{\epsilon}^{2}+1)+C_{1}<C_{3}'\bigg|H_{k}\cap G_{1}\cap G_{2}\right)P(H_{k}|G_{1}\cap G_{2})P(G_{1}\cap G_{2})\\
= & P(G_{1}\cap G_{2}),
\end{align*}
which completes the proof.

\end{proof}

Part (1) of Theorem \ref{SSStableThm2} is saying that the noise power gets spread out across the Synchrosqueezing time-frequency plane instead of
accumulating in a single component instantaneous frequency, despite the fact that the Synchrosqueezing frequencies are generally concentrated and are
not directly comparable to conventional Fourier frequencies (see \cite{Thakur2010}). Part (2) is the same statement for the entire component $A_{k}e^{2\pi i\phi_{k}}$,
including the amplitude. Note that the above argument can also be repeated for more general Gaussian processes such as ``$1/f$'' noise. In this case, the
covariances will change (to e.g. $\mathbb{E}(N(\psi_{a,b})\overline{N(\psi_{a,b})})=\epsilon^{2+p}\left\langle \psi,R\psi\right\rangle $),
but the pseudo-covariances will still be zero by the translation-invariance of the operator $R$, and the rest of the argument will be identical.


\section{\label{sec:analysis}Implementation Overview}

We now describe the Synchrosqueezing transform in a discretized form
that is suitable for efficient numerical implementation. We also discuss
several issues that arise in this process and how various parameters
are to be chosen in practice. We are given a vector $\tf\in\bbR^{n}$,
$n=2^{L+1}$, where $L$ is a nonnegative integer. Its elements, $\tf_{m},m=0,\ldots,n-1$,
correspond to a uniform discretization of $f(t)$ taken at the time
points $t_{m}=t_{0}+m\Delta t$. To prevent boundary effects, we pad
$\tf$ on both sides (using, e.g., reflecting boundary conditions). Figure \ref{fig:simple}
shows a graphical example of each step of the procedure outlined in this section.

\begin{figure}[h!]
  \centering
  \includegraphics{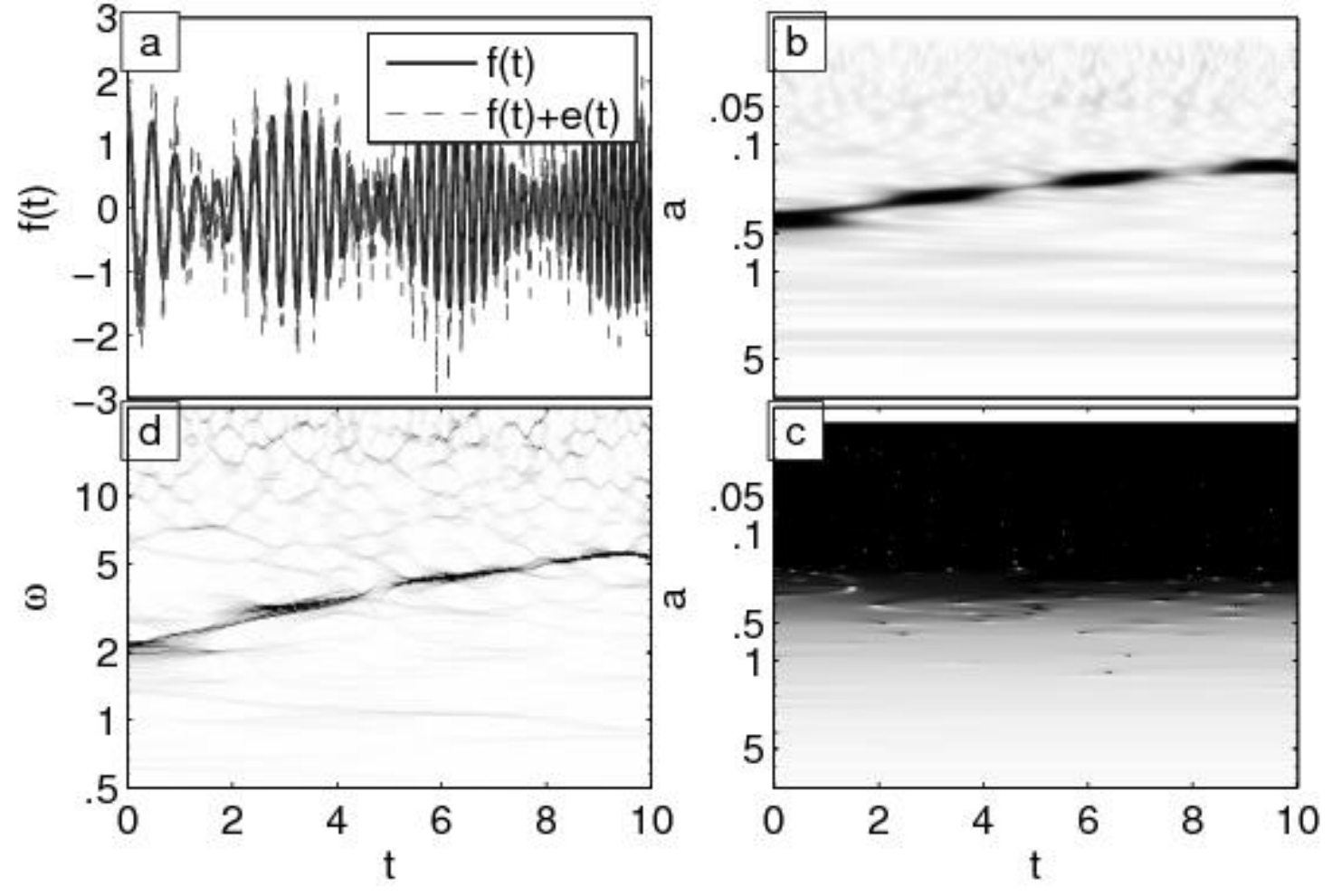}
  \caption{\label{fig:simple}
    Synchrosqueezing example for $f(t) = (1 + 0.6 \cos(2 t)) \cos(4
    \pi t+1.2 t^2)$ and additive noise $e(t) \sim \cN(0, 0.5^2)$.
    Panels in clockwise order:
    a) $f(t)$ and $f(t)+e(t)$ sampled, $n=1024$ points.
    b) CWT of $f$, $|W_f|$.
    c) Phase transform ${\omega}_{f}$.
    d) Synchrosqueezing transform $|T_f|$; with $\gamma=10^{-5}$ (see \S\ref{sec:gamma}).
  }
\end{figure}

\subsection{\label{sec:DWT}DWT of sampled signal: $\tW_{\tf}$}

We first choose an appropriate mother wavelet $\psi$. We pick $\psi$ such that
its Fourier transform $\wh{\psi}(\xi)$ (normalized as in Theorem
\ref{SSThm}) is concentrated in absolute value around some positive
frequency $\xi=\omega_{0}$, and is small and rapidly decaying elsewhere
(i.e. $\lim_{|t|\to\infty}P(t)\psi(t)=0$ for all polynomials $P$).
Many standard mother wavelets satisfy these properties, and we compare several
examples in \S\ref{sec:wcmp}.

The DWT samples the CWT $W_{f}$ at the locations $(a_{j},t_{m})$,
where $a_{j}=2^{j/n_{v}}\Delta t$, $j=1,\ldots,Ln_{v}$, and the
"voice number" $n_{v}$ \cite{Goupillaud1984} is a user-defined parameter
that affects the number of scales we work with (we have
found that $n_{v}=32$ or $64$ works well in practice). The DWT of $\tf$
can be calculated in $O(n_{v}n\log_{2}^{2}n)$ operations using the
FFT. We outline the steps below.

First note that $W_{f}(a,\cdot)=a^{-1/2}\overline{\psi(-\frac{\cdot}{a})}\star f$,
where $\star$ denotes the convolution. In the frequency domain, this
relationship becomes $\wh{W}_{f}(a,\xi)=a^{1/2}\wh{f}(\xi)\overline{\wh{\psi}(a\xi)}$.
We use this to calculate the DWT, $\tW_{\tf}(a_{j},t_{m})$. Let $\cF_{n}$
($\cF_{n}^{-1}$) be the standard (inverse) circular Discrete Fourier
Transform. Then 
\begin{equation}
\tW_{\tf}(a_{j},\cdot)=\cF_{n}^{-1}\left((\cF_{n}\tf)\odot\overline{\wh{\psi}_{j}}\right).\label{eq:Wxdisc}
\end{equation}
Here $\odot$ denotes elementwise multiplication and $\wh{\psi}_{j}$
is an $n$-length vector with $(\wh{\psi}_{j})_{m}=a_{j}^{1/2}\wh{\psi}(a_{j}\xi_{m})$;
$\xi_{m}$ are samples in the unit frequency interval: $\xi_{m}=2\pi m/n$,
$m=0,\ldots,n-1$.



\subsection{The phase transform: \textmd{\normalsize $\wt{\omega}_{\tf}$}}

The next step is to calculate the phase transform \eqref{Omega}. We first require a slight modification of the definition
\eqref{Omega},
\begin{equation}
\omega_{f}(a,b)=\frac{1}{2\pi}\Im{(W_{f}(a,b))^{-1}\partial_{b}W_{f}(a,b)}.\label{eq:omegax}
\end{equation}
In theory Eqs. \eqref{eq:omegax} and \eqref{Omega} are equivalent, and in
practice \eqref{eq:omegax} is a convenient way to obtain a real-valued frequency
from \eqref{Omega}.  We denote the discretization of $\omega_{f}$
by $\wt{\omega}_{\tf}$.

In practice, signals have noise and other artifacts due to, e.g.,
sampling errors, and computing the phase of $W_{f}$ is unstable when
$\abs{W_{f}}\approx0$. Therefore, we choose a hard threshold parameter
$\gamma>0$ and disregard any points where $\abs{W_{f}}\leq\gamma$
. The exact choice of $\gamma$ is discussed in Sec. \ref{sec:gamma}.
We use this to define the numerical support of $\tW_{\tf}$, on which
$\omega_{f}$ can be estimated:

\begin{center}
$\wt{\cS}_{\tf}^{\gamma}(m)=\set{j:\abs{\tW_{\tf}(a_{j},t_{m})}>\gamma}$,
for $m=0,\ldots,n-1$. 
\par\end{center}

The derivative in \eqref{eq:omegax} can be calculated by taking finite
differences of $\tW_{\tf}$ with respect to $m$, but Fourier transforms
provide a more accurate alternative. Using the property $\wh{\partial_{b}W_{f}}(a,\xi)=2\pi i\xi\wh{W_{f}}(a,\xi)$,
we estimate the phase transform, for $j\in\wt{\cS}_{\tf}^{\gamma}(m)$,
as 

\begin{center}
$\wt{\omega}_{\tf}(a_{j},t_{m})=\frac{1}{2\pi}\Im{\left(\tW_{\tf}(a_{j},t_{m})\right)^{-1}\partial_{b}\tW_{\tf}(a_{j},t_{m})},$ 
\par\end{center}

with the derivative of $W_{f}$ estimated via (e.g., \cite{Tadmor1986}) 

\begin{center}
$\partial_{b}\tW_{\tf}(a_{j},\cdot)=\cF_{n}^{-1}\left((\cF_{n}\tf)\odot\wh{\partial\psi}_{j}\right),$ 
\par\end{center}

where $(\wh{\partial\psi}_{j})_{m}=2\pi ia_{j}^{1/2}\xi_{m}\wh{\psi}(a_{j}\xi_{m})/\Delta t$
for $m=0,\ldots,n-1$.

The normalization of $\wt{\omega}$ corresponds to a dominant,
constant frequency of $\alpha$ when $f(t)=\cos(2\pi\alpha t)$. This
allows us to transition from the time-scale plane to a time-frequency
plane according to the reassignment map $(a,b)\to(\omega(a,b),b)$. Note
that the phase transform is not the instantaneous frequency itself except
in some simple cases, but contains requisite ``frequency information'' that
we use to recover the actual frequencies in the next step.

\subsection{Synchrosqueezing in the time-frequency plane: ${T_{f}(\omega,b)}$}

We now compute the Synchrosqueezing transform using the reassigned
time-frequency plane. Suppose we have some ``frequency divisions''
$\set{w_{l}}_{l=0}^{\infty}$ with $w_{0}>0$ and $w_{l+1}>w_{l}$
for all $l$. Let the frequency bin $\cW_{l}$ be given by $\{w'\in\bbR:|w'-w_{l}|<|w'-w_{l'}|\,\forall l'\not=l\}$,
or in other words, the set of points closer to $w_{l}$ than any other
$w_{l'}$. We define the discrete-frequency Wavelet Synchrosqueezing
transform of $f$ by 
\begin{equation}
T_{f}(w_{l},b)=\int_{\set{a:\omega_{f}(a,b)\in\cW_{l},|W_{f}(a,b)|>\gamma}}W_{f}(a,b)a^{-3/2}da.\label{eq:Tfdef}
\end{equation}
This is essentially the limiting case of the definition \eqref{SS}
as $\delta\to0$ (note the argument in \eqref{ApproxId} and see also
\cite[p. 5-6]{Daubechies2010}), but with the frequency variable $\eta\in\bbR$
replaced by the discrete intervals $\cW_{l}$. Note that the discretization
$\tW_{\tf}$ is given with respect to $n_{a}=Ln_{v}$ log-scale samples
of the scale $a$, so we correspondingly discretize \eqref{eq:Tfdef}
over a logarithmic scale in $a$. The transformation $a(z)=2^{z/n_{v}}$,
$da(z)=a\frac{\log2}{n_{v}}dz$, leads to the modified integrand ${W_{f}(a,b)a^{-1/2}\frac{\log2}{n_{v}}dz}$
in \eqref{eq:Tfdef}.

To choose the frequency divisions $w_{l}$, note that the time step
$\Delta t$ limits the range of frequencies that can be estimated.
One form of the Nyquist sampling theorem shows that the maximum frequency is $\ol{w}=w_{n_{a}-1}=\frac{1}{2\Delta t}$.
Since $f$ is discretized over an interval of length $n\Delta t$,
the fundamental (minimum) frequency is $\ul{w}=w_{0}=\frac{1}{n\Delta t}$.
Combining these bounds on a logarithmic scale, we get the divisions
$w_{l}=2^{l\Delta w}\ul{w}$, $l=0,\ldots,n_{a}-1$, where $\Delta w=\frac{1}{n_{a}-1}\log_{2}(n/2)$.

We can now calculate a fully discretized estimate of \eqref{eq:Tfdef},
denoted by $\tT_{\tf}$. Since we have already tabulated $\wt{\omega}_{\tf}$
and $\wt{\omega}_{\tf}(a_{j},t_{m})$ lands in at most one frequency
bin $\cW_{l}$, the integral in \eqref{eq:Tfdef} can be computed
efficiently by finding the associated $\cW_{l}$ for each $(a_{j},t_{m})$
and adding it to the appropriate sum. This results in $O(n_{a}n)$
computations for the entire Synchrosqueezed plane $\tT_{\tf}$. We
summarize this approach in pseudocode in Alg. \ref{alg:Tf}.

\begin{algorithm}[h!]
\caption{Fast calculation of $\tT_\tf$ for fixed $m$}
\label{alg:Tf}
  \begin{algorithmic}
    \small
    \FOR[Initialize $\tT$ for this $m$]{$l = 0$ to $n_a-1$}
    \STATE $\tT_{\tf}(w_l, t_m) \leftarrow 0$
    \ENDFOR
    \FORALL[Calculate \eqref{eq:Tfdef}]{$j \in
      \wt{\cS}^\gamma_{\tf}(m)$}
    \STATE \COMMENT{Find frequency bin via $w_l = 2^{l \Delta w}
      \ul{w}$, and $\wt{\omega}_{\tf} \in \cW_l$}
    \STATE $l \leftarrow
    \min\left( \max\left( \textrm{ROUND} \left[ \frac{1}{\Delta w} \log_2 \left(
      \frac{\wt{\omega}_{\tf}(a_j,b_m)}{\ul{w}} \right) \right], 0 \right), n_a-1 \right) $
    \STATE \COMMENT{Add normalized term to appropriate integral;
      $\Delta z = 1$}
    \STATE $\tT_{\tf}(w_l,t_m) \leftarrow \tT_{\tf}(w_l,t_m) + \frac{\log 2}{n_v}
    \tW_{\tf}(a_j,t_m) a^{-1/2}_j $
    \ENDFOR
  \end{algorithmic}
\end{algorithm}

We remark that as an alternative, the frequency divisions $w_l$ can be spaced linearly, instead of the
logarithmic scale we use in keeping with the discretization of the CWT in Section \ref{sec:DWT}. Examples of
this approach can be found in \cite{Wu2012}, but in practice we find that there are no significant differences
either way. In principle, the CWT itself can be discretized linearly as well, but the approach we took in
Section \ref{sec:DWT} is standard and is preferred for its computational efficiency (see \cite{Da92,Wavelab}).

\subsection{Component reconstruction}

We can finally recover each component $f_{k}$ from $\tT_{\tf}$ by inverting the CWT (integrating) over the
frequencies $w_{l}$ that correspond to the $k$th component, an approach similar to filtering on a conventional TF plot.
Let $l\in\mathcal{L}_{k}(t_{m})$ be the indices of a small frequency band around the curve of $k$th component in
the phase transform space (based on the results of Thm. \ref{SSStableThm} and Thm. \ref{SSStableThm2} parts 2).
These frequencies can be selected by hand or estimated via a standard least-squares ridge extraction method \cite{CHT97},
as done in the Synchrosqueezing Toolbox. Then, using the fact that $f_{k}$ is real, we have
\begin{equation}
f_{k}(t_{m})=2\cR_{\psi}^{-1}\Re{\sum_{l\in\mathcal{L}_{k}(t_{m})}\tT_{\tf}(w_{l},t_{m})},\label{eq:Tfrecon}
\end{equation}

where $\cR_{\psi}$ is the normalization constant from Theorem \eqref{SSThm}. \\

\subsection{Selecting the threshold $\gamma$\label{sec:gamma}}

The hard wavelet threshold $\gamma$ effectively decides the lowest
CWT magnitude at which $\omega$ is deemed trustworthy. In an
ideal setting wherein the signal is not corrupted by noise, this threshold
can be set based on the machine epsilon (we suggest $10^{-8}$ for
double precision floating point systems). In practice, $\gamma$ can
be seen as a hard threshold on the wavelet representation (shrinking
small magnitude coefficients to $0$), and its value determines the
level of filtering.

In \cite{Donoho1994}, a nearly minimax optimal procedure was proposed
for denoising sufficiently smooth signals corrupted by additive white
noise. This algorithm consists of soft- or hard-thresholding the wavelet
coefficients of the corrupted signal, followed by inversion of the
filtered wavelet representation. In \cite{Donoho1995}, this estimator
was also shown to be nearly optimal in terms of root mean square error.
The asymptotically optimal threshold is $\sqrt{2\log n}\cdot\sigma$,
where $n$ is the signal length and $\sigma^{2}$ is the noise power.
Following \cite{Donoho1994}, the noise power can be estimated from
the Median Absolute Deviation (MAD) of the finest level wavelet coefficients.
This is the threshold we suggest and use throughout our simulations:
\[
\gamma=1.4826\sqrt{2\log n}\cdot\text{MAD}(|\tW_{\tf}|_{1:n_{v}})
\]
where $1.4826$ is the multiplicative factor relating the MAD of a
Gaussian distribution to its standard deviation, and $|\tW_{\tf}|_{1:n_{v}}$
are the wavelet coefficient magnitudes at the $n_{v}$ finest scales
(the first octave).


\section{\label{sec:wmisc}Numerical Simulations}

In this section, we provide several numerical examples that illustrate the
ideas in Sec. \ref{sec:main} and \ref{sec:analysis} and show how
Synchrosqueezing compares to a variety of other time-frequency transforms
in current use. The MATLAB scripts used to generate the figures for these
examples are available at \cite{SSToolbox}.

\subsection{Comparison of Synchrosqueezing with the CWT, STFT and EEMD}
We first compare the Synchrosqueezing time-frequency decomposition to the
continuous wavelet transform (CWT) and the short-time Fourier transform (STFT)
\cite{Oppenheim1999}. We show its superior precision, in both time
and frequency, at identifying the components of complicated oscillatory
signals.  We then show its ability to reconstruct (via filtering)
an individual component from a curve in the time-frequency plane. We also compare
the recovered component with the results of the ensemble empirical mode
decomposition (EEMD) method (see \cite{Wu2005} for details).

\begin{figure}[h]
  \centering
  \includegraphics[width=.8\columnwidth]{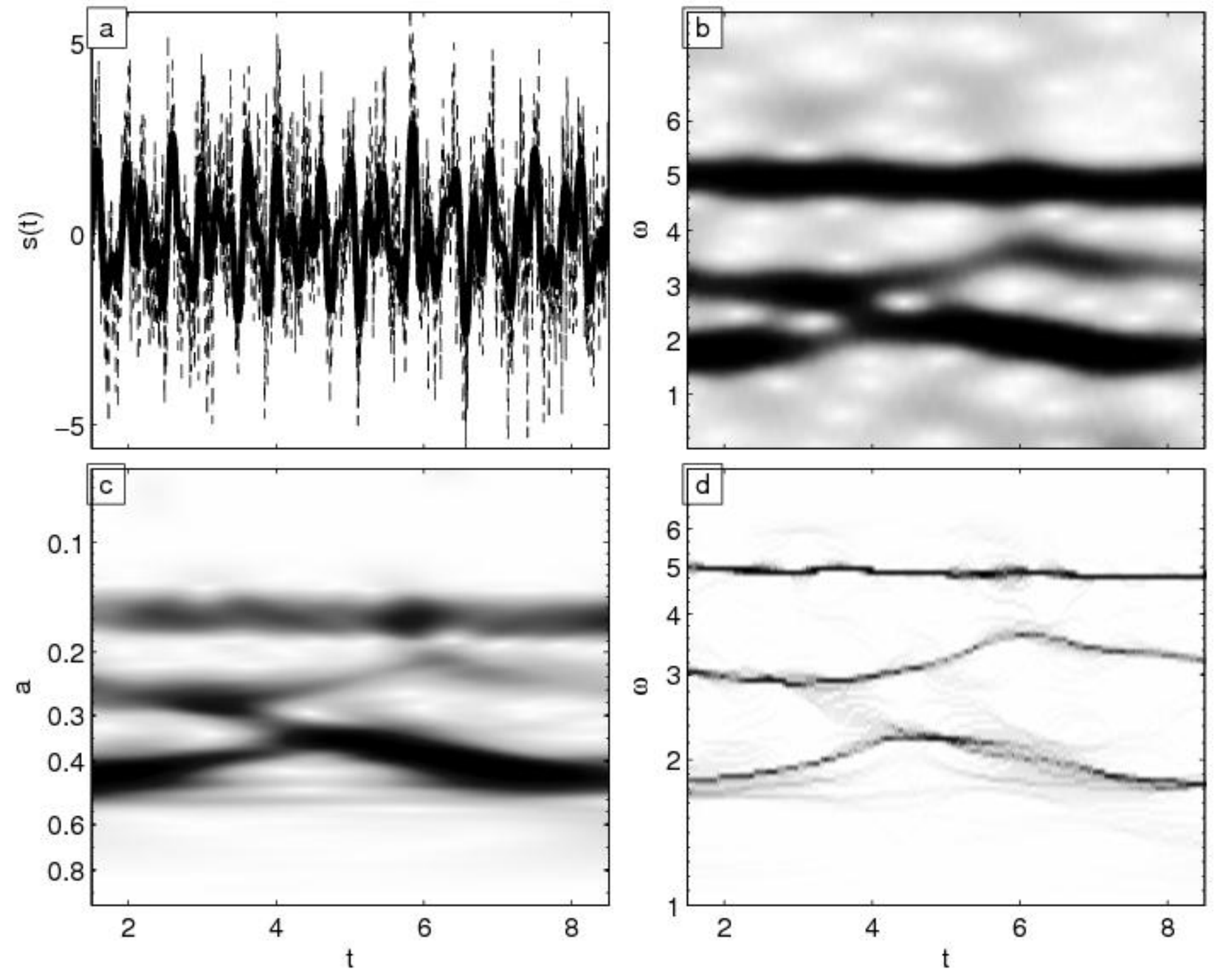}
  \caption{\footnotesize\label{fig:cmpstftwave} 
    Comparison of Synchrosqueezing with the STFT and CWT.  (a) Synthetic signal $s(t)$ (bold), corrupted with
 	 noise (dashed), shown for  $t \in [2,8]$.  (b) STFT of signal $s(t)$. (c) CWT of signal $s(t)$. (d) Synchrosqueezing plot $T_s(\omega,t)$.
  }
\end{figure}

In Fig. \ref{fig:cmpstftwave} we consider a signal $s(t)=s_1(t)+s_2(t)+s_3(t)+N(t)$ defined on
$t \in [0,10]$ that contains different kinds of time-varying AM and FM modulation.  It is composed of the following components:
\begin{align*}
 s_1(t) &= (1+0.2\cos(t)) \cos(2\pi(2t + 0.3\cos(t))), \\
 s_2(t) &= (1+0.3\cos(2t)) e^{-t/15} \cos(2\pi(2.4t + 0.5t^{1.2} + 0.3\sin(t))) \\
 s_3(t) &= \cos(2\pi(5.3t + 0.2t^{1.3})).
\end{align*}
The signal is discretized to $n=2048$ points and corrupted by additive Gaussian white noise $N(t)$ with noise power $\sigma^2 = 2.4$,
leading to an SNR of $-2.6$~dB.

To make the comparison consistent (as the $\gamma$ threshold in Synchrosqueezing has a denoising effect), we first denoise
the signal using the Wavelet hard-thresholding methodology of~\S\ref{sec:gamma}. We then feed this denoised signal
to the STFT, CWT, and Synchrosqueezing transforms. We use the shifted bump wavelet (see~\S\ref{sec:wcmp}) and $n_v = 32$
for both the CWT and Synchrosqueezing transforms, and a Hamming window with length 400 and overlap of length 399 for the
STFT. These STFT parameters are selected to have a representation visually balanced between time and frequency
resolution~\cite{Oppenheim1999}.\\

The component $s_3$ is close to a Fourier harmonic and is clearly identified in the Synchrosqueezing plot $T_s$
(Fig. \ref{fig:cmpstftwave}(d)) and the STFT plot (Fig. \ref{fig:cmpstftwave}(b)), though the frequency
estimate is more precise in $T_s$. The other two components have time-varying instantaneous frequencies and
can be clearly distinguished in the Synchrosqueezing plot, while there is much more smearing and distortion in them
in the STFT and CWT. The temporal resolution of the CWT and STFT is also significantly lower than for Synchrosqueezing
due to the selected parameters. A shorter time window or wavelet will provide higher temporal resolution, but lower
frequency resolution and more smearing between the three components.

\begin{figure}[h]
  \centering
  \includegraphics[width=1\columnwidth]{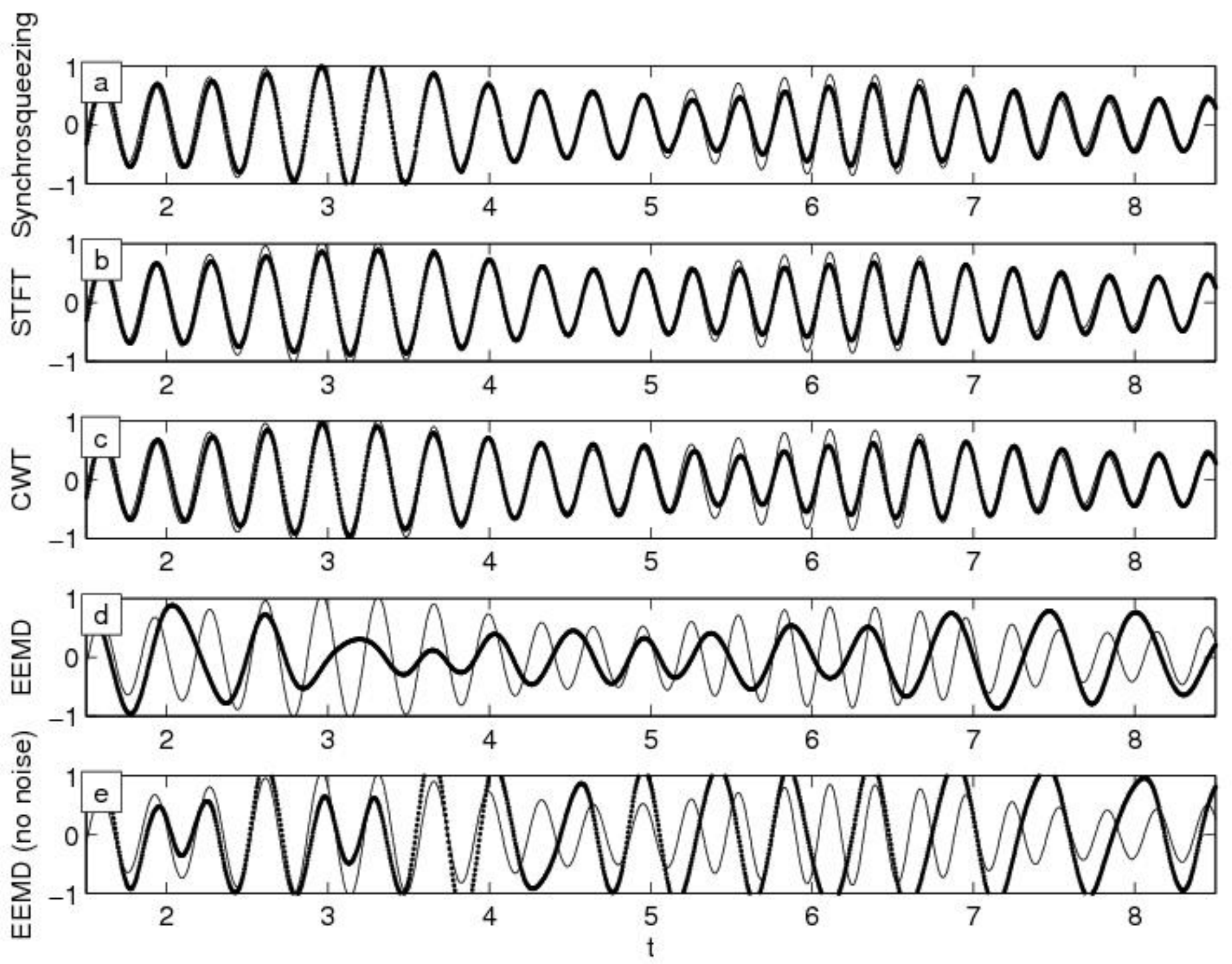}
  \caption{\footnotesize\label{fig:cmpstftwave2} 
    (a-c) Reconstruction of the component $s_{2}$ on $[2,8]$ performed by inverting Synchrosqueezing (a), CWT (b) and STFT (c), shown as dotted curves.
	(d-e) The EEMD extraction of $s_{2}$ with $50$ ensembles performed on the signal $s$ (d), and on the same signal without any noise and one ensemble
	(e). The original component $s_{2}$ is shown in solid curves for reference.}
\end{figure}

Fig. \ref{fig:cmpstftwave2} shows the component $s_{2}$ reconstructed from the TF plots in Fig. \ref{fig:cmpstftwave} by
inverting each transform in a small band around the curve of $s_{2}$. All three time-frequency methods provide comparable results
and pick up the component reasonably accurately, although the AM behavior around $t\in[5,7]$ is slightly smothered out as a
result of the noise. On the other hand, EEMD exhibits a poor amplitude recovery and a drifting phase over time, even when applied
to the original signal without any noise. In general, EMD/EEMD is sensitive to amplitude changes over time that impose strong
requirements on the frequency separation between the components \cite{Flandrin2008}, while Synchrosqueezing and the other
time-frequency methods produce good results as long as the bandwidth of the mother wavelet or window is small enough,
according to Theorem \ref{SSThm}.

\subsection{Comparison of Synchrosqueezing with Reassignment Techniques}

We next compare the analysis part of Synchrosqueezing to two of the most common time-frequency reassignment (TFR) methods,
based on the spectrogram and the Wigner-Ville distribution (see \cite{Flandrin1999}, ch. 4 for details). We apply these
techniques to $s(t)$, the signal from the last example, for $t \in [2,8]$ and with the noise increased to
$\sigma^2 = 5$ ($-5.8$~dB SNR). The results are shown in Fig. \ref{fig:cmptfr}.

\begin{figure}[h]
  \centering
  \includegraphics{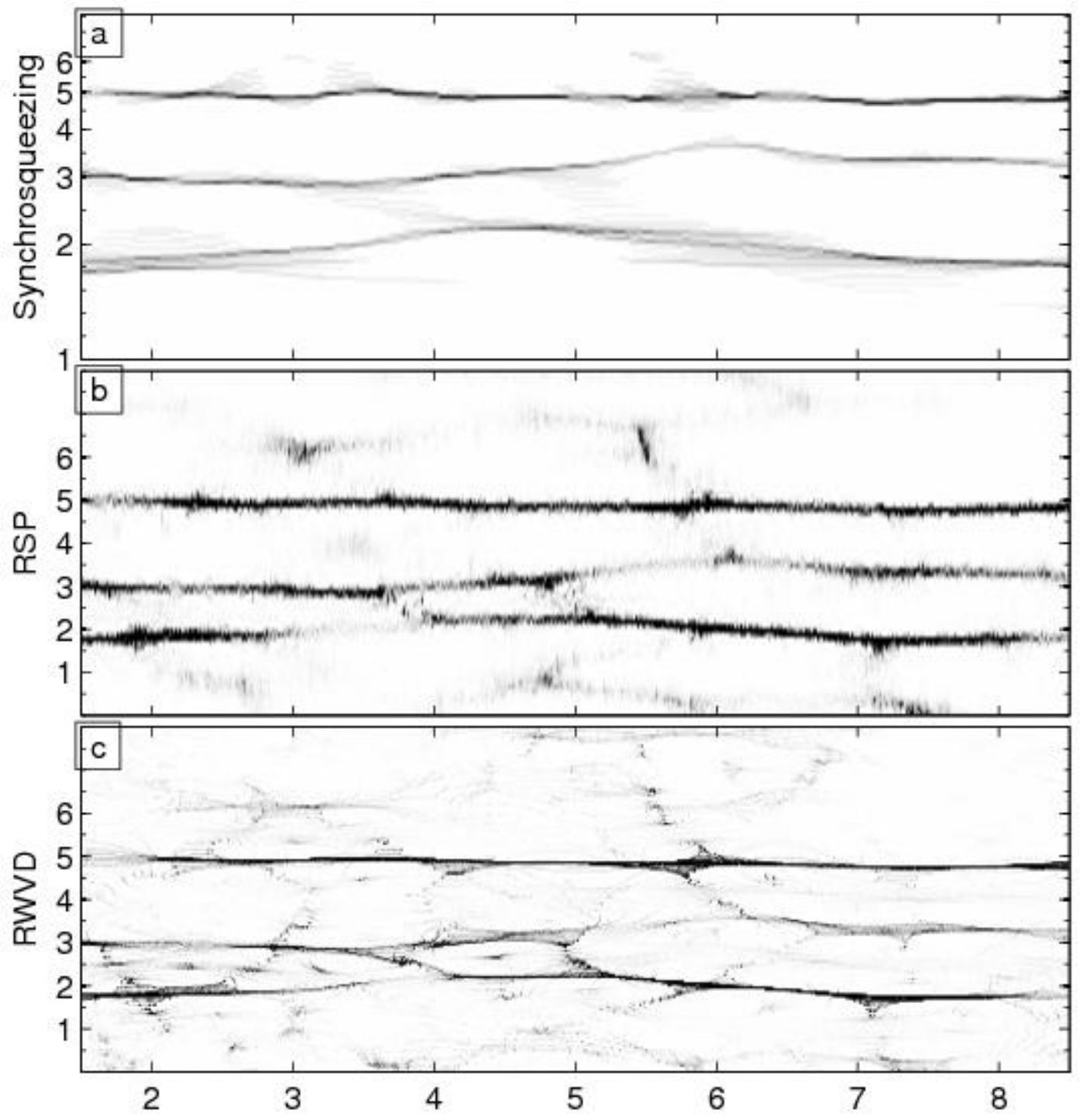}
   \caption{\label{fig:cmptfr}
     (a) Synchrosqueezing $\wt{T}_{\tf}$ of $\tf$.
     (b) Reassigned spectrogram / STFT of $\tf$ (RSP).
     (c) Reassigned smoothed pseudo-WVD of $\tf$ (RWVD).
   }
\end{figure}

Synchrosqueezing can be understood as a variant of the standard TFR methods. In TFR methods,
the directional reassignment vector is computed in both time and frequency from the magnitude
of the STFT or WVD, which is then used to remap the energies in the TF plane of a signal.
In contrast, the Synchrosqueezing transform can be thought of as a reassignment vector only in
the frequency direction. The fact that there are no time shifts in the TF plane is what allows
the reconstruction of the signal to be possible. We note that in Fig. \ref{fig:cmptfr}, the
Synchrosqueezing TF plot contains fewer spurious components than the other TFR plots. The other TFR methods
exhibit additional clutter in the TF plane caused by the noise, and the reassigned
WVD also contains traces of an extra curve between the second and third components, a result of
the quadratic cross-terms that are characteristic of the WVD \cite{Flandrin1999}.

\subsection{Nonuniform Samples and Spline Fitting}

We now demonstrate how Synchrosqueezing analysis and extraction works for a
three-component signal that has been irregularly sampled. For $t \in [2,8]$, let
\begin{align}
f(t) &= (1+0.5 \cos(t)) \cos(4\pi t)\\				\nonumber
     &+ 2e^{-0.1t}\cos(2\pi(3t+0.25\sin(1.4t)))\\	\nonumber
     &+ (1+0.5\cos(2.5t)) \cos(2\pi (5t+2t^{1.3})), \nonumber
\end{align}
and let the sampling times be perturbations of uniformly spaced times having the form
$t'_{m}=\Delta t_1 m + \Delta t_2 u_{m}$, where $\{u_{m}\}$ is sampled from the uniform distribution
on $[0,1]$. We take $\Delta t_1=11/300$ and $\Delta t_2=11/310$, which leads to approximately $165$
samples on the interval $[2,8]$ and an average sampling rate of $27.2$, or about three times the
maximum instantaneous frequency of $9.85$. As indicated in Cor. \ref{cor:splinestable}, we account for the
nonuniform sample spacing by fitting a cubic spline through $(t'_m,f(t'_m))$ to get the interpolant
$f_s(t)$, discretized on the finer grid $t_m = m \Delta t$ with $\Delta t=10/1024$ and $m=0,\ldots,1023$.
The resulting vector, $\tf_s$, is a discretization of the original signal plus a spline error term $e(t)$.\\

\begin{figure}[h]
  \centering
  \includegraphics[width=.7\columnwidth]{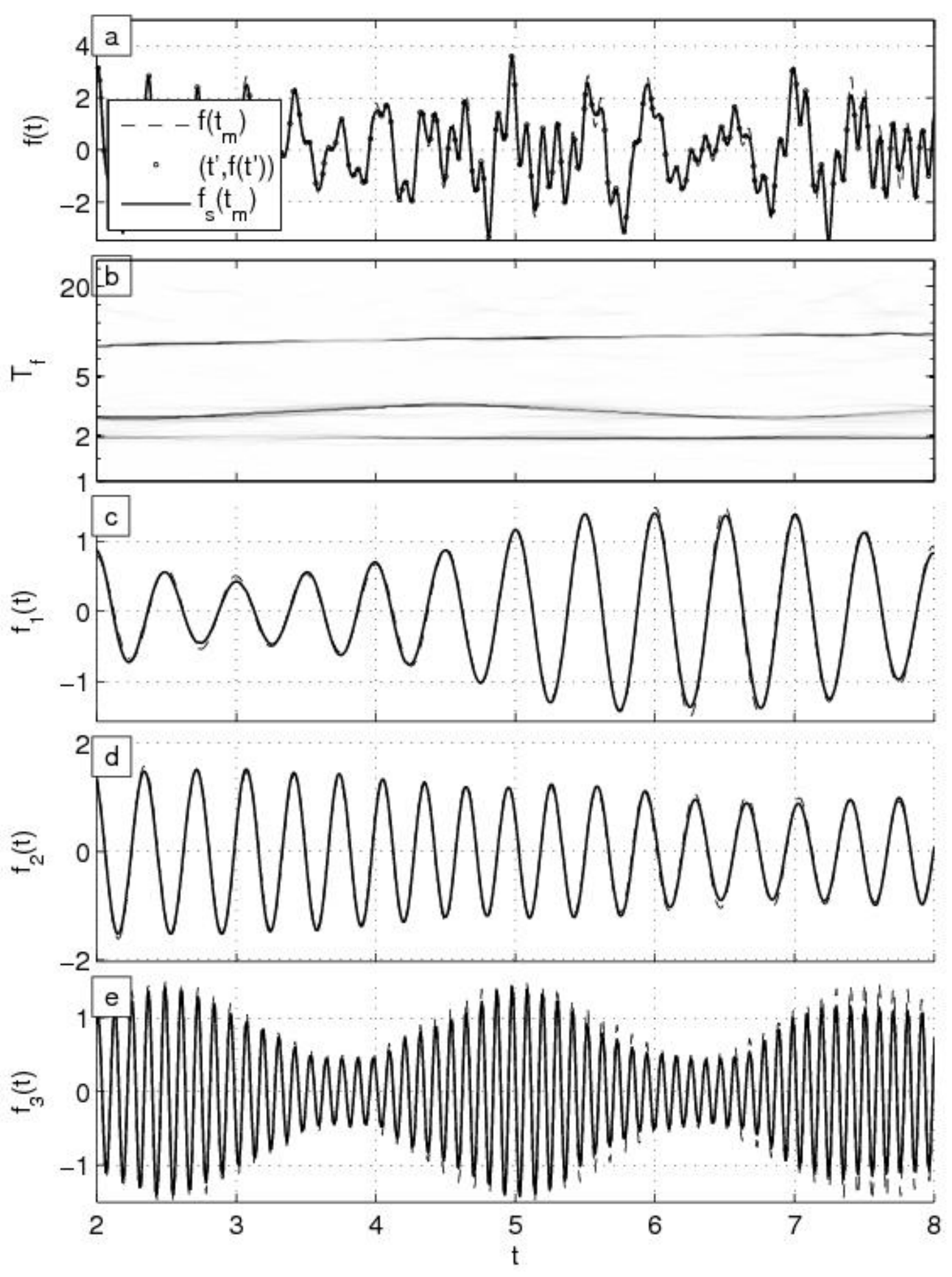}
   \caption{\label{fig:nonunif}
     (a) Nonuniform samples of $f$, with spline interpolant $\tf_s$ (solid), and original signal $f$ (dashed).
     (b) Synchrosqueezing TF plane $\wt{T}_{\tf_s}$.
     (c-e) Extracted components $\tf^*_k$ for $k=1,2,3$ (solid) compared to originals $\tf_k$ (dashed).
   }
\end{figure}

Fig. \ref{fig:nonunif}(a-e) shows the Synchrosqueezing TF plot $\tf_s$ and the three reconstructed components.
The spline interpolant approximates the original signal closely, except for a few oscillations for $t>7.3$ where
the highest frequencies of $f$ occur. The Synchrosqueezing results are largely unaffected by the errors and have no
spurious spectral information in the TF plot. The effect of the interpolation errors for $t>7.3$ is also localized
in time and only influences the AM recovery of the third component, which contains the highest frequencies and is
the most difficult to recover as indicated by Thm. \ref{SSStableThm}. In general, however, we find that components
close to the Nyquist frequency are picked up fairly accurately as long as the mother wavelet is chosen according
to Theorem \ref{SSThm} and the components are spaced sufficiently far apart (for cases where multiple high frequency
components are close together, see the STFT Synchrosqueezing approach in \cite{Thakur2010}).

\subsection{\label{sec:wcmp}Invariance to the underlying transform}

As a final example, we show the effect of the underlying mother wavelet on the Synchrosqueezing transform.
As discussed in \cite{Daubechies2010}, Synchrosqueezing is largely invariant to the choice of the mother
wavelet, and the main differences one sees in practice are due to the wavelet's relative concentrations in
time and frequency (in particular, how far away its frequency content is from zero), as opposed to its precise shape.

Fig. \ref{fig:wcmp} shows the effect of Synchrosqueezing on the discretized spline signal $\tf_s$ from the
last example, using three different complex CWT mother wavelets. These wavelets are:
\begin{align*}
&\textbf{a. Morlet (shifted Gaussian)} \\
&\qquad \wh{\psi}_a(\xi) \propto \exp(-2\pi^2(\mu-\xi)^2),
  \quad \xi \in \bbR\\
&\textbf{b. Complex Mexican Hat} \\
&\qquad \wh{\psi}_b(\xi) \propto \xi^2 \exp(-2\pi^2\sigma^2 \xi^2),
  \quad \xi > 0\\
&\textbf{c. Shifted Bump} \\
&\qquad \wh{\psi}_d(\xi) \propto \exp\left(- (1- ((2\pi\xi-\mu)/\sigma )^2 )^{-1} \right), \\
&\qquad \xi \in [\sigma(\mu-1), \sigma(\mu+1)]
\end{align*}
where for $\psi_a$ we use $\mu=1$, for $\psi_b$ we use $\sigma=1$, and for $\psi_c$ we use $\mu=5$ and $\sigma=1$.
These respectively correspond to about $\Delta=0.5$, $0.25$ and $0.16$ in Thm. \ref{SSThm}. We find that, as
indicated by Thm. \ref{SSThm}, the most accurate representation is given by the bump wavelet $\psi_c$, whose
frequency support is the smallest and exactly (instead of approximately) positive and finite.

\begin{figure}[h]
  \centering
  \includegraphics[width=.8\columnwidth]{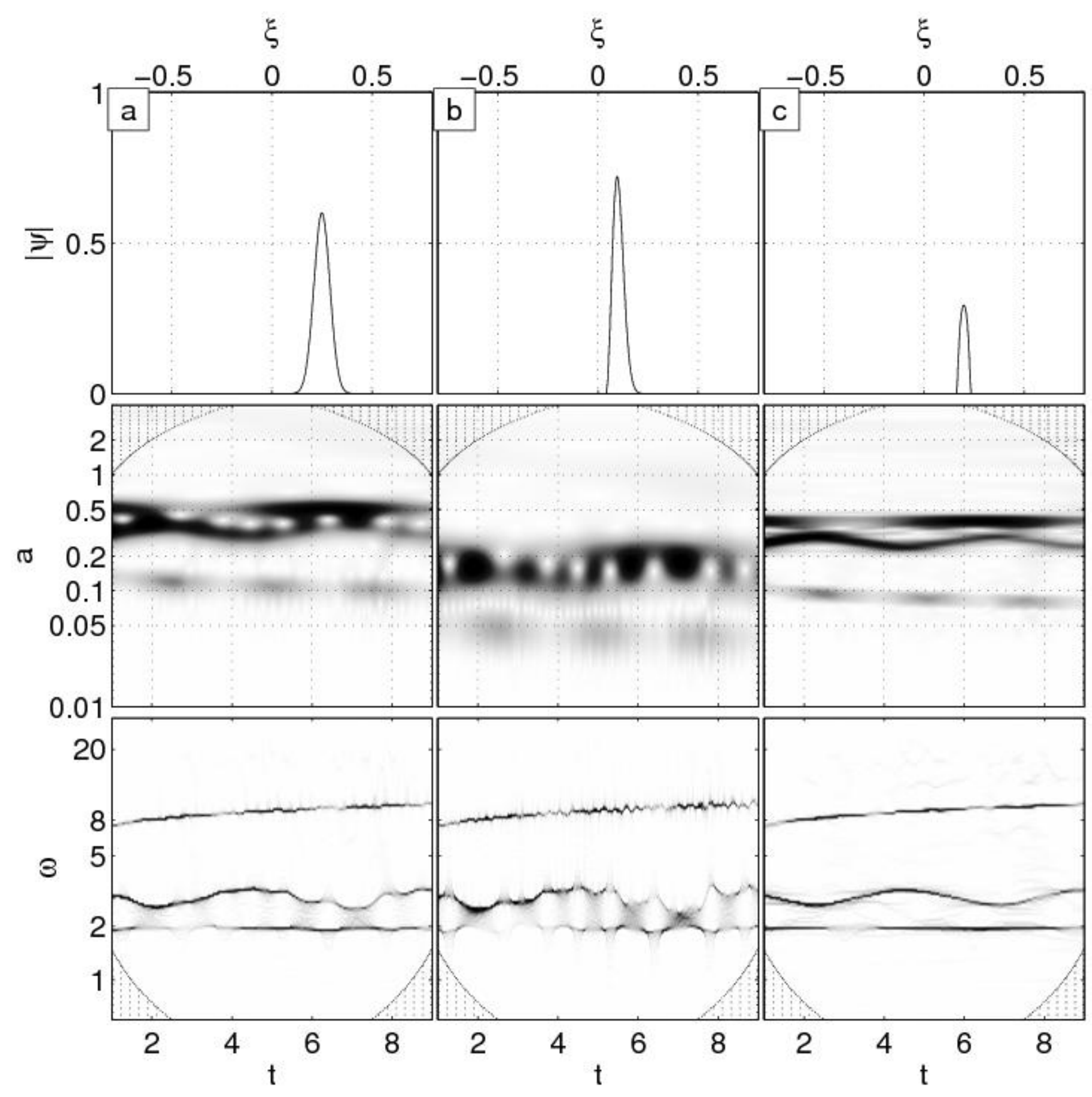}
  \caption{\footnotesize\label{fig:wcmp} 
    Wavelet and Synchrosqueezing transforms of $\tf_s$.
    Columns (a-c) represent choice of mother wavelet $\psi_a \ldots \psi_c$.
    Top row: $|2 \wh{\psi}(4\xi)|$.  Center row: $|W_{f_s}|$.  Bottom row:
    $|T_{f_s}|$.
    }
\end{figure}



\section{\label{sec:SSpaleo}Aspects of the mid-Pleistocene transition}

In this section, we apply Synchrosqueezing to analyze the characteristics
of a calculated index of the incoming solar radiation (insolation) and
paleoclimate records of repeated transitions between glacial (cold)
and interglacial (warm) climates, i.e., ice age cycles, primarily
during the Pleistocene epoch (from $\approx$\,1.8\,Myr to
$\approx$\,12\,kyr before the present). The analysis of time series is
crucial for paleoclimate research becuase its empirical base consists
of a growing collection of long deposited records. \\

The Earth's climate is a complex, multi-component, nonlinear system
with significant stochastic elements \cite{Pierrehumbert2010}. The key
external forcing field is the insolation at the top of the
atmosphere (TOA). Local insolation has predominantly harmonic
characteristics in time (diurnal cycle, annual cycle and
very long Milankovi\'{c} orbital cycles) enriched by the solar variability.
The response of the planetary climate, which varies at all time scales \cite{Huybers2006},
also depends on random perturbations (e.g., volcanism), nonstationary solid boundary
conditions (e.g., plate tectonics and global ice distribution),
internal variability and feedback (e.g., global carbon
cycle). Various paleoclimate records, called proxies, provide us with
information about past climates beyond observational records. These proxies
are biogeochemical tracers, i.e. molecular or isotopic properties,
imprinted into various types of deposits (e.g., deep-sea sediment, ice
cores, etc.), and they indirectly represent physical conditions (e.g. temperature)
at the time of deposition. We focus on climate variability during the
last 2.5\,Myr (that also includes the late Pliocene and the Holocene) as recorded by
${\delta}^{18}O$ The oxygen isotope variations in seawater are expressed
as deviations of the ratio of $^{18}O$ to $^{16}O$ with respect to the
present-day standard. Carbonate shells of foraminifera plankton (benthic forams)
at the bottom of the ocean record ${\delta}^{18}O$ changes in seawater during
their growth. The benthic ${\delta}^{18}O$ indicates changes in the global sea
level, ice volume and deep ocean temperature. During the buildup of land ice sheets
and the decrease in sea level in cold climates, the lighter $^{16}O$ evaporates more
readily than $^{18}O$ and accumulates in ice sheets, leaving the surface water
enriched with $^{18}O$. At the same time, the inclusion of $^{18}O$ during the
formation of carbonate shells records the ambient seawater temperature of the
benthic forams \cite{Lea2003}.  \\


\begin{figure}[h!]
   \centering
   \includegraphics[width=.7\columnwidth]{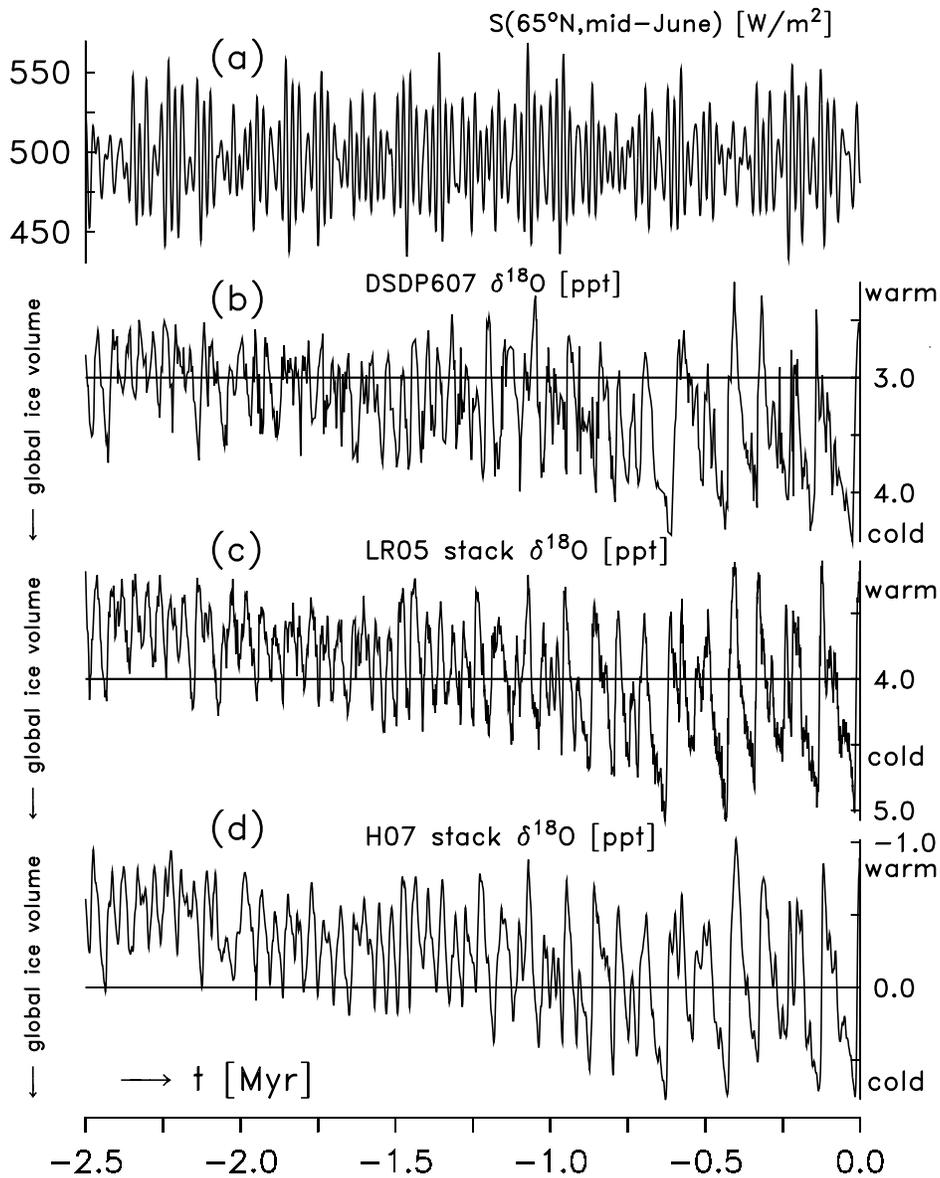}
   \caption{\label{fig:paleo_ts} (a) Calculated June 21 TOA insolation
     flux at $65^{o}N$: $f_{SF}$.  Climate response as recorded by
     benthic forams ${\delta}^{18}O$ (b) in the DSDP607 core $f_{CR1}$, (c) in
     the LR05 stack $f_{CR2}$, and (d) in the H07 stack $f_{CR3}$.}
\end{figure}

We first examine a calculated element of the daily TOA solar forcing
field. Fig. \ref{fig:paleo_ts}(a) shows $f_{SF}$, the mid-June
insolation at $65^{o}N$ at 1\,kyr intervals \cite{Berger1991}. This
TOA forcing index has been widely used to gain
insight into the timing of advances and retreats of ice sheets in the
Northern Hemisphere during this period, based on the classic
Milankovi\'{c} hypothesis that summer solstice insolation at
$65^{o}N$ paces ice age cycles (e.g., \cite{Hays1976, Berger1988}). The CWT and
Synchrosqueezing spectral decompositions (using the shifted bump mother
wavelet as in the rest of the paper), in
Fig. \ref{fig:paleo_synsq}(a) and Fig. \ref{fig:paleo_synsq}(e)
respectively, show the key time-varying oscillatory components of
$f_{SF}$. Both panels confirm the presence of strong
precession cycles (at periodicities $\tau$=19\,kyr and 23\,kyr),
obliquity cycles (primary at 41\,kyr and secondary at 54\,kyr), and
very weak eccentricity cycles (primary periodicities at 95\,kyr and
124\,kyr, and secondary at 400\,kyr). However, the Synchrosqueezing
spectral structure is far more concentrated along the
frequency (periodicity) direction than the CWT. \\


We next analyze the North Altantic and global climate response during
the last 2.5\,Myr as deposited in benthic ${\delta}^{18}O$ in long
sediments cores (in which deeper layers contain forams settled further back in time).
Fig.  \ref{fig:paleo_ts}(b) shows
$f_{CR1}$: benthic ${\delta}^{18}O$, sampled at irregular time intervals
from a single core, DSDP Site 607, in the North Atlantic
\cite{Ruddiman1989}.  Fig. \ref{fig:paleo_ts}(c)
shows $f_{CR2}$: the orbitally tuned benthic ${\delta}^{18}O$ stack of
\cite{Lisiecki2005} (LR05). It is an average of 57 globally
distributed records placed on a common age
model using a graphic correlation technique
\cite{Lisiecki2002}. Fig. \ref{fig:paleo_ts}(d) shows $f_{CR3}$: the benthic ${\delta}^{18}O$ stack of \cite{Huybers2007} (H07) calculated from
14 cores (mostly in the Northern Hemisphere) using the extended
depth-derived age model free from orbital tuning \cite{Huybers2004}. The ${\delta}^{18}O$ records included in these stacks vary
over different ranges primarily due to different ambient temperatures
at different depths of the ocean floor at core drill sites. Also, prior to combining the cores in
the H07 stack, the record mean between 0.7\,Myr ago and the present
was subtracted from each ${\delta}^{18}O$ record, so we have
different vertical ranges in Fig. \ref{fig:paleo_ts}(b) through Fig. \ref{fig:paleo_ts}(d). All ${\delta}^{18}O$ records are
spline interpolated to 1\,kyr intervals prior to the spectral analysis.\\

\begin{figure}[h!]
   \centering
   \includegraphics[width=.95\columnwidth]{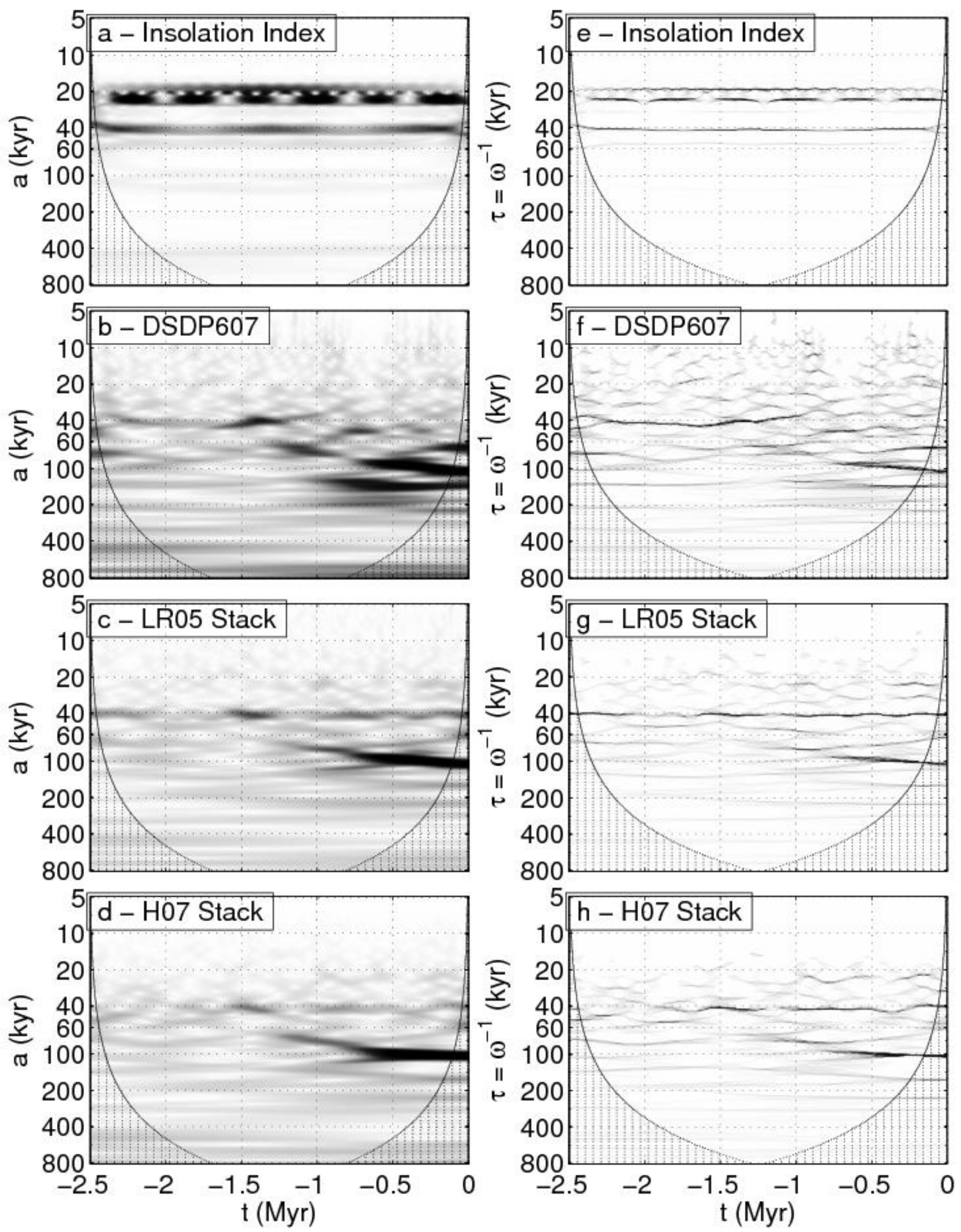}
   \caption{\label{fig:paleo_synsq} The CWT time-scale
     decomposition of (a) the solar forcing index $f_{SF}$, and the climate response in benthic
     ${\delta}^{18}O$ of (b) the DSDP607 core $f_{CR1}$, (c) the LR05
     stack $f_{CR2}$, and (d) the H07 stack $f_{CR3}$. The Synchrosqueezing time-periodicity
     decomposition of (e) the solar forcing index $f_{SF}$, and the climate response in benthic
     ${\delta}^{18}O$ of (f) the DSDP607 core $f_{CR1}$, (g) the LR05 stack $f_{CR2}$, and
	 (h) the H07 stack $f_{CR3}$.}
\end{figure}

The Synchrosqueezing decomposition in the right panels in
Fig. \ref{fig:paleo_synsq} is a far more precise time-frequency
representation of signals from DSDP607 and the stacks than the CWT decomposition in
left panels in Fig. \ref{fig:paleo_synsq} or an STFT analysis of the H07 stack
\cite[Fig. 4]{Huybers2007}. Noise due to local characteristics and measurement errors of
each core is reduced when we shift the spectral analysis from DSDP607 to the stacks, and this is particularly visible in the finer
scales and higher frequencies. In addition, the stacks in Fig. \ref{fig:paleo_synsq}(g) and Fig. \ref{fig:paleo_synsq}(h) show far less
stochasticity above the obliquity band (higher frequencies) compared to DSDP607 in Fig. \ref{fig:paleo_synsq}(f). This enables the 23\,kyr precession cycle
to appear mostly coherent over the last 1\,Myr, especially in comparison to the CWT decompositions. Thanks to the stability of Synchrosqueezing,
the spectral differences below the obliquity band (lower frequencies) are less pronounced betweeen the stacks and DSDP607.
Overall, the stacks show less noisy time-periodicity evolution than DSDP607 or any other single core due to the averaging of multiple, noisy time
series with shared signals. The Synchrosqueezing decompositions are much sharper than the corresponding CWT decompositions, and a time average of the
Synchrosqueezing magnitudes delineates the harmonic components more clearly than a comparable Fourier spectrum (not shown).\\

During the last 2.5\,Myr, the Earth experienced a gradual decrease in
the global long-term temperature and $\text{CO}_2$ concentration, and an
increase in mean global ice volume accompanied with
glacial-interglacial oscillations that have intensified towards the present
(shown in Fig. \ref{fig:paleo_ts}(b) through
Fig. \ref{fig:paleo_ts}(d)). The mid-Pleistocene transition, occurring
abruptly or gradually sometime between 1.2\,Myr and 0.6\,Myr ago, was marked by the
the shift from 41\,kyr-dominated glacial cycles to 100\,kyr-dominated
glacial cycles recorded in deep-sea proxies (e.g., \cite{Ruddiman1986, Mudelsee1997,
Clark2006}). The cause of the emergence of strong 100\,kyr cycle
in the late-Pleistocene climate and incoherency of the precession
band prior to about 1\,Myr (evident in Fig. \ref{fig:paleo_synsq}(g) and
Fig. \ref{fig:paleo_synsq}(h)) are still unresolved questions. Both types of spectral analyses of selected
${\delta}^{18}O$ records indicate that the climate system does not respond linearly to external solar forcing.\\

The Synchrosqueezing decomposition precisely reveals key modulated signals that rise above the stochastic
background. The gain (the ratio of the climate response amplitude to insolation forcing
amplitude) at a given frequency or period, is not constant due to
the nonlinearity of the climate system. The 41\,kyr
obliquity cycle of the global climate response is present almost
throughout the entire Pleistocene in Fig. \ref{fig:paleo_synsq}(g) and
Fig. \ref{fig:paleo_synsq}(h). The most prominent feature of the mid-Plesitocene transition
is the initiation of a lower frequency signal ($\approx$\,70\,kyr)
about 1.2\,Myr ago that gradually evolves into the dominant 100\,kyr
component in the late Pleistocene (starting about 0.6\,Myr
ago). Finding the exact cause for this transition in the dominant ice age
periodicity is beyond the scope of our paper, but the Synchrosqueezing
analysis of the stacks shows that it is not a direct cause-and-effect response
to eccentricity variability (very minor variation of the total
insolation).\\

The precision of the Synchrosqueezing decomposition allows us
to achieve a more accurate inversion across any limited frequency
band of interest than the CWT spectrum. Inverting the Synchrosqueezing
transform over the key orbital periodicity bands (i.e. filtering) in
Fig. \ref{fig:paleo_filtered} emphasizes the nonlinear relationship between
the TOA insolation and climate evolution. The top panels in Fig. \ref{fig:paleo_filtered}, left to
right, show rapidly diminishing contributions to the insolation from precession to eccentricity. However, all of the panels
below the top row in Fig. \ref{fig:paleo_filtered} show a moderately increasing amplitude of variability, i.e., the inverse
cascade of climate response with respect to $f_{SF}$ from the precession to the eccentricity
band in the late-Pleistocene (after $\approx$\,0.6\,Myr). On average, the obliquity band contains more power than the precession
band in DSDP607 and both stacks. Internal feedback mechanisms, most likely due to the long-term cooling of the global climate, amplify the
response of the eccentricity band after the early-Pleistocence (after $\approx$\,1.2\,Myr).   
The cross-band differences in Fig. \ref{fig:paleo_filtered} (rows 2-4) indicate that a superposition of precession
cycles can modulate the climate response in lower frequency bands,
particularly in the eccentricity band, as the climate drifts into a
progressively colder and potentially more nonlinear state (e.g., \cite{Berger1977, Ridgwell1999}). \\

The Synchrosqueezing analysis of the solar insolation index and
benthic $\delta^{18}O$ records makes a new contribution in three
important ways.  First, it produces sharper spectral traces
of a complex system's evolution through the high-dimensional climate state
space than the CWT (e.g., \cite[Fig. 2]{Bolton1995}) or STFT (e.g., \cite[Fig. 2]{Clark2006}).
Second, it better delineates the effects of noise on specific
frequency ranges when comparing a single core to a stack. 
Low frequency components are mostly robust to noise induced by local climate variability,
deposition processes and measurement techniques. Third, Synchrosqueezing allows for a more
accurate reconstruction of the signal components within frequency bands of interest than
the CWT or STFT. Questions about the key processes governing large-scale
climate variability over the last 2.5\,Myr can be answered by using high-precision
data analysis methods such as Synchrosqueezing, in combination with a hierarchy of
dynamical models at various levels of complexity that reproduce the key aspects
of the Pliocene-Pleistocene history. The resulting understanding of past climate
epochs may benefit predictions of the future climate. \cite{Skinner2008}

\begin{figure}[h!]
   \centering
   \includegraphics[width=1\columnwidth]{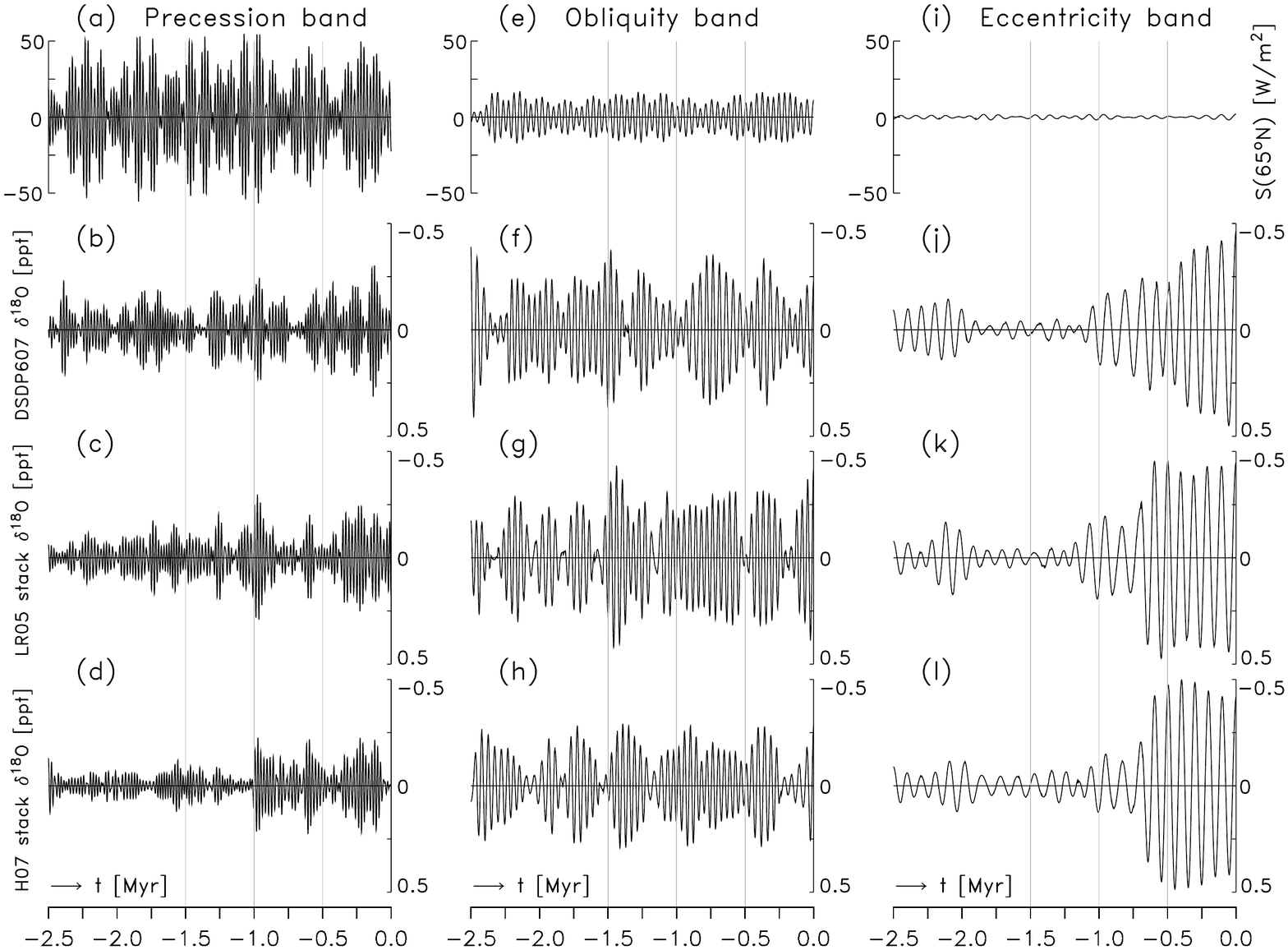}
   \caption{\label{fig:paleo_filtered} Milankovi\'{c} orbital
     components extracted by inverting the Synchrosqueezing transforms
     of the insolation index $f_{SF}$ (a, e, and i), and the climate response in benthic ${\delta}^{18}O$
     from the DSDP607 core $f_{CR1}$ (b, f, and j), the LR05 stack $f_{CR2}$ (c,
     g, and k) and the H07 stack $f_{CR3}$ (d, h, and l). The transforms are
	 inverted over the precession band from 17\,kyr to 25\,kyr (left column), the
	 obliquity band from 40\,kyr to 55\,kyr (middle column), and the eccentricity band
	 from 90\,kyr to 130\,kyr (right column).  }
\end{figure}

\section{Conclusions and Future Directions}
Synchrosqueezing can be used to spectrally analyze and decompose a wide variety of signals
with high precision in time and frequency. An efficient implementation runs in $O(n_v n \log^2 n)$
time and is stable against errors in the signals, both in theory and in practice.  We have
shown how it can be used to gain further insight into the climate evolution of the past 2.5
million years.\\

The authors are also using the Synchrosqueezing transform to study additional topics in climate dynamics,
meteorology and oceanography (climate variability and change, and large-scale teleconnection), as well as
topics in ECG analysis (respiration and T-end detection, \cite{SSTECG}). Synchrosqueezing is also
being used by others to address problems in the analysis of mechanical transmissions
\cite{LL12} and the design of automated trading systems \cite{ATM12}.

\section*{Acknowledgement} 
The authors would like to thank Prof. I. Daubechies for many insightful discussions. N.S. Fu\v{c}kar
also acknowledges valuable input from Dr. O. Elison Timm and H.-T. Wu acknowledges input from Prof. Z. Wu on
the use of EEMD. E. Brevdo acknowledges partial support from NSF GRF grant No. DGE-0646086. H.-T. Wu
and G. Thakur acknowledge partial support from FHWA grant No. DTFH61-08-C-00028. N.S. Fu\v{c}kar
acknowledges support by JAMSTEC, NASA grant No. NNX07AG53G and NOAA grant No. NA11NMF4320128.

\bibliographystyle{plain}
\bibliography{thesis,paleo}

\end{document}